\documentclass[11pt]{amsart}
\usepackage[margin=1.5in]{geometry}

\usepackage{graphicx}
\usepackage{tikz}
\usetikzlibrary{positioning,arrows}
\usepackage{etoolbox}
\usepackage{setspace} 
\usepackage{tikz}
\usepackage{mathtools}
\usepackage{tikz-cd}
\usepackage{wasysym}
\usepackage{adjustbox}
\usepackage{mathrsfs}
\usepackage{import}
\usepackage{url}

\onehalfspacing

\newcommand{\abs}[1]{\lvert #1 \rvert}

\newcommand{\gen}[1]{\langle #1 \rangle}
\newcommand{\res}[2]{{\rm res}_{#1}^{#2}}
\newcommand{\tr}[2]{{\rm tr}_{#1}^{#2}}
\newcommand{\N}[2]{{\rm N}_{#1}^{#2}}
\newcommand{\TI}[2]{\mathcal{TI}_{#1/#2}}
\newcommand{\tamb}[1]{\underline{#1}}
\newcommand{\sr}[1]{\mathscr{#1}}

\newcommand{\gent}[1]{(\!(#1)\!)}

\newcommand{\RR}{\mathbb{R}}
\newcommand{\NN}{\mathbb{N}}
\newcommand{\QQ}{\mathbb{Q}}
\newcommand{\ZZ}{\mathbb{Z}}
\newcommand{\CC}{\mathbb{C}}
\newcommand{\FF}{\mathbb{F}}

\newcommand{\XX}{\XBox}
\newcommand{\Dr}{\mathcal{D}}

\newcommand{\card}{{\rm card}}
\newcommand{\burn}{\underline{A}}

\newcommand{\Gal}{\operatorname{Gal}}


\usepackage{amssymb}
\usepackage{amsmath}
\usepackage{amsthm}
\usepackage{amsfonts}
\usepackage{amsxtra}
\usepackage[all,2cell]{xy}
\usepackage{verbatim}
\usepackage{color}
\usepackage{xcolor}
\usepackage[unicode=true, pdfusetitle,
 bookmarks=true,bookmarksnumbered=false,
 breaklinks=false,
 backref=false,
 colorlinks=true,
 linkcolor=black,
 citecolor=black,
 urlcolor=black,
 final
]{hyperref}

\usepackage{bookmark}

\theoremstyle{plain}
\newtheorem{theorem}{Theorem}[section]
\makeatletter\let\c@theorem\c@theorem\makeatother
\newtheorem{lemma}{Lemma}[section]
\makeatletter\let\c@lemma\c@theorem\makeatother
\newtheorem{corollary}{Corollary}[section]
\makeatletter\let\c@corollary\c@theorem\makeatother
\newtheorem{proposition}{Proposition}[section]
\makeatletter\let\c@proposition\c@theorem\makeatother

\makeatletter\let\c@conjecture\c@theorem\makeatother

\theoremstyle{definition}
\newtheorem{definition}{Definition}[section]
\makeatletter\let\c@definition\c@theorem\makeatother

\makeatletter\let\c@example\c@theorem\makeatother

\makeatletter\let\c@xca\c@theorem\makeatother

\theoremstyle{remark}
\newtheorem{remark}{Remark}[section]
\makeatletter\let\c@remark\c@theorem\makeatother


\makeatletter
\let\c@equation\c@theorem
\numberwithin{equation}{section}
\makeatother

\theoremstyle{plain}
\newcommand{\thistheoremname}{}
\newtheorem*{genericthm*}{\thistheoremname}

 \newcommand\nofootnote[1]{%
  \begingroup
  \renewcommand\thefootnote{}\footnote{#1}%
  \addtocounter{footnote}{-1}%
  \endgroup
}

\usepackage[capitalise]{cleveref}

\newcommand{\newrefformat}[2]{}

\crefname{lemma}{Lemma}{Lemmas}
\crefname{theorem}{Theorem}{Theorems}
\crefname{definition}{Definition}{Definitions}
\crefname{proposition}{Proposition}{Propositions}
\crefname{remark}{Remark}{Remarks}
\crefname{corollary}{Corollary}{Corollaries}
\crefname{equation}{Equation}{Equations}
\crefname{ex}{Example}{Examples}
\crefname{appsec}{Appendix}{Appendices}

\pagenumbering{arabic}

\usepackage{amssymb, amsmath}
\usepackage{palatino, hyperref}
\usepackage[normalem]{ulem}
\usepackage[T1]{fontenc}

\usepackage{mathrsfs}
\usetikzlibrary{positioning,arrows}
\usepackage{fullpage}
\allowdisplaybreaks

\def\<{{\langle}}
\def\>{{\rangle}}




\begin{document}

\title[The Trace Ideal for Cyclic Extensions] {The Tambara Structure of the Trace Ideal for Cyclic Extensions}
\author[M. Calle and S. Ginnett]{Maxine Calle and Sam Ginnett\\with an Appendix by Harry Chen and Xinling Chen}

\begin{abstract}
This paper explores the Tambara functor structure of the trace ideal of a Galois extension. In the case of a (pro-)cyclic extension, we are able to explicitly determine the generators of the ideal. Furthermore, we show that the absolute trace ideal of a cyclic group is strongly principal when viewed as an ideal of the Burnside Tambara Functor. Applying our results, we calculate the trace ideal for extensions of finite fields. The appendix determines a formula for the norm of a quadratic form over an arbitrary finite extension of a finite field.
\end{abstract}
\maketitle
\nofootnote{DOI: 10.1016/j.jalgebra.2020.04.036.}
\nofootnote{\copyright 2020. This manuscript version is made available under the CC-BY-NC-ND 4.0 license \url{http://creativecommons.org/licenses/by-nc-nd/4.0/}.}

\section{Introduction and Background}\label{sec:intro}

Let $K/F$ be a Galois extension of fields with characteristic different from $2$ with (profinite) Galois group $G := \Gal(K/F)$.  For a field $L$, let $GW(L)$ denote the Grothendieck-Witt ring of (formal differences of) isometry classes of regular quadratic forms over $L$. The Scharlau transfer (with respect to the field trace ${\rm tr}_{K/F}:K\to F$) is the homomorphism of Abelian groups $GW(K)\to GW(F)$ taking $q$ to ${\rm tr}_{K/F}\circ q$.  When considered along with the restriction (\emph{i.e.}, extension of scalars) homomorphism $GW(F)\to GW(K)$, the Grothendieck-Witt ring gains the structure of a Mackey functor in the language of A.~Dress \cite{dress:1971}.

Tambara functors \cite{tambara:1993} are elaborations of Mackey functors with multiplicative norm maps in addition to restrictions and transfers.  In \cite{bachmann:2016}, T.~Bachmann shows that the (\emph{a priori} different but ultimately equal) norm maps of D.~Ferrand \cite{ferrand:1998} and M.~Rost \cite{rost:2003} turn $GW$ into a Tambara functor.  Our aim in this work is to leverage this additional structure in order to study Dress's trace homomorphism between Burnside and Grothendieck-Witt rings, the construction of which we sketch presently.

Recall that the Burnside ring $A(G)$ of $G$ is defined as the Grothendieck construction applied to the semi-ring of isomorphism classes of finite $G$-sets under disjoint union and Cartesian product.  In \cite{dress:1971}, Dress shows that the assignment $A(G)\to GW(F)$ determined by $G/H\mapsto (x\in K^H \mapsto {\rm tr}_{K^H/F}(x^2))$ is a ring map which we call the \emph{trace homomorphism}.  The trace homomorphism is surjective precisely when $K$ contains square roots of all the elements of $F$ \cite[Appendix B, Theorem 3.1]{dress:1971}, and its kernel is the \textit{trace ideal} of $K/F$.\footnote{Instead of considering the trace ideal of a fixed extension, we might first fix a group $G$ and study the intersection of trace ideals across all Galois extensions with this Galois group, as has been done in \cite{epkenhans:1998}. We will refer to this object as the \textit{absolute trace ideal} to avoid confusion, although previous literature does not include the descriptive. Epkenhans \cite{epkenhans:1998, epkenhans:1999} determines the absolute trace ideal of elementary Abelian $2$-groups, cyclic $2$-groups, and the quaternion and dihedral groups of order $8$. \label{footnote:abs TI}} 
Assembling the trace homomorphisms for subextensions of $K/F$, we get a map of Tambara functors, whose kernel is an ideal of the Burnside Tambara functor.
It is this kernel which we will determine for cyclic Galois extensions.

In order to state our main theorem, let $C_N$ denote the cyclic group of order $N$, and for $M$ dividing $N$ let $\burn(C_N/C_M) = A(C_M)$.  For $m\mid M$, let $t_{M/m}$ denote the element of $\burn(C_N/C_M)$ corresponding to the transitive $C_M$-set $C_M/C_m$ of cardinality $M/m$.

\begin{theorem}[see \cref{thm:odd deg Cn extn}, \cref{thm:c2}, \cref{thm:c4}, \cref{thm:twopowers}, and \cref{thm:TI for Cn}]
Suppose $\Gal(K/F) = C_N$ where $N$ has prime decomposition $2^\mu p_1^{\sigma_1}\cdots p_s^{\sigma_s}$. Then $\ker(\burn_{C_N}\to \tamb{GW}_F^K)$, seen as a Tambara ideal of $\burn_{C_N}$, is generated by\begin{enumerate}
    \item $t_{p_i/1}-p_it_{p_i/p_i}$ for $i=1,\dots, s$,
    \item a generator $\sr{G}$ which is determined by $K/K^{C_{2^\mu}}$. 
    If $\mu=0$, then $\sr{G}=0$ as well. If $K/K^{C_{2^\mu}}$ is quadratic, then $\sr{G}$ is determined by the discriminant.
    Otherwise (for $\mu\geq 2)$, $\sr{G}$ depends on the discriminant of both $K/K^{C_{2}}$ and $K^{C_2}/K^{C_{4}}$, as well as
    an embedding condition on $K/K^{C_{4}}$.
\end{enumerate} 

\end{theorem}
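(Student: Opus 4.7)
The plan is to exploit the cyclic structure by factoring $N = 2^\mu \cdot p_1^{\sigma_1} \cdots p_s^{\sigma_s}$ and handling the odd primary parts and the $2$-primary part separately, using that a Tambara ideal of $\burn_{C_N}$ is determined by what it does at each subgroup $C_d$ ($d \mid N$) and is closed under restriction, transfer, and norm. Since the listed generators are each concentrated at a specific prime (the odd-prime generators at $\burn(C_N/C_{p_i})$ and $\sr{G}$ at subgroups of $C_{2^\mu}$), I would verify kernel-containment prime by prime, and then argue that the Chinese Remainder style splitting $C_N \cong C_{2^\mu} \times \prod_i C_{p_i^{\sigma_i}}$ makes the prime contributions to the trace ideal orthogonal, so these generators together suffice.

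For each odd prime $p_i$, I would first check that $t_{p_i/1} - p_i t_{p_i/p_i}$ lies in the kernel: the $C_{p_i}$-set $C_{p_i}/1$ maps to the trace form of an odd-degree cyclic Galois extension, which is known to be isometric in $GW$ to $p_i\langle 1\rangle$, exactly the image of $p_i t_{p_i/p_i}$. That this one element generates the entire kernel at $p_i$ should follow from a case analysis showing that any relation at $\burn(C_N/C_{p_i^k})$ is forced by restriction and transfer from the basic relation at the prime level. For the $2$-primary piece I would induct on $\mu$: the $\mu=0$ case is trivial; for $\mu=1$ the trace form of $K = K^{C_2}(\sqrt{d})$ is diagonalizable as $\langle 2, 2d\rangle$, so $\sr{G}$ is forced to be the natural element recording the relation between the regular representation and the discriminant class; for $\mu\ge 2$ the generator must combine a discriminant contribution from $K/K^{C_2}$ with one from $K^{C_2}/K^{C_4}$, augmented by an embedding condition witnessing whether the lower quadratic subextension actually fits into a $C_{2^\mu}$-tower, an obstruction classically encoded by a cup product in Galois cohomology.

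The main obstacle is precisely the $\mu\ge 2$ case. Odd-degree trace forms are rigid (determined by degree), so the trace ideal at odd primes is cut out by one simple relation; even-degree extensions, by contrast, see finer quadratic-form invariants, and the difficulty is to show that the embedding condition actually arises as the image under the Tambara structure of an element of $\burn_{C_N}$, rather than merely as a condition on the image of the trace map. I expect this to require a careful analysis of how norm maps between $\burn(C_N/C_{2^j})$ for $j = 1,2,\dots,\mu$ interact with the discriminant-type generators, perhaps expressing the embedding obstruction as a suitable norm of a discriminant element. Once this is settled, the assembly across primes reduces to verifying that the subgroup-support of each generator is disjoint from the others, so that the total kernel is a direct sum of prime-local kernels and the five cited sub-results combine into the stated description.
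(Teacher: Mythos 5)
Your per-prime verification that the generators lie in the kernel and your use of the fact that odd-degree cyclic trace forms equal $p_i\langle 1\rangle$ both match the paper (this is \cref{thm:odd deg ker card}). But two ingredients of your sufficiency argument do not hold up. First, you assert that the single relation $t_{p_i}-p_i$ at $\burn(C_N/C_{p_i})$ generates the kernel at all $C_{p_i^k}$ by ``restriction and transfer.'' That cannot work: restriction only moves down the subgroup lattice, and transferring $t_p-p$ from $C_{p^j}$ to $C_{p^{j+1}}$ produces $t_{p^2}-pt_p$, not the element $t_p-p$ one needs at the higher level. The paper's \cref{lem:prime-gens} (and \cref{lem:2gens} at $p=2$) crucially uses the \emph{norm} map: one computes $\N{K}{M}(n(t_p-p))$ explicitly via \cref{eqn:norm} and then subtracts a suitable multiple of the transfer to recover $n(t_p-p)$ one level up, and only then does transfer-plus-induction give $t_{p^{i+1}}-p^{i+1}$. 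Without the norm, the argument stalls.

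Second, the ``CRT / direct sum of prime-local kernels'' assembly is not rigorous. The Burnside ring of $C_N$ contains mixed elements such as $t_{2^j m}$ for $m$ odd, and the trace ideal contains things like $t_{2^j m}-mt_{2^j}$ which are not sums of an element supported on $2$-powers and one supported on odd divisors; ideal closure under multiplication, transfer and norm mixes the prime supports. The paper's proof of \cref{thm:TI for Cn} does \emph{not} invoke orthogonality: it multiplies the odd generators by $t_{2^j}$ to manufacture $t_{2^j m}-mt_{2^j}$, uses these to reduce an arbitrary element of the trace ideal to one supported on powers of $2$, restricts that remainder to $C_{2^k}$ to identify it using \cref{thm:twopowers}, and finally lifts the $2$-power generators back up via the lemmas. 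This elementwise reduction is the step your proposal skips. Separately, your suggestion of deriving the embedding obstruction from a cup product in Galois cohomology is a different route from the paper, which cites the quadratic-form criterion of Drees--Epkenhans--Kr\"uskemper for embedding $K/K^{C_4}$ into a $C_8$-extension; that could plausibly be made to work, but as sketched it does not connect the obstruction to a concrete Tambara element the way \cref{thm:c4} and \cref{thm:twopowers} do.
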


We handle the pro-cyclic case as well in \cref{thm:p-adic trace ideal} and \cref{thm:alg closure}.  This allows a Tambara-theoretic description of the trace ideal for $\overline{\mathbb{F}_q}/\mathbb{F}_q$ in \cref{thm:finite fields}. These results permit the description of every element of the trace ideal of a (pro-)cyclic extension as the transfer of the product of the norm of the restriction of our specified generators. See \cref{rmk:lvl description of gen ideal} for a precise version of this observation.

The homotopy theory-inclined reader will note that the Burnside and Grothendieck-Witt functors appear as endomorphisms of the sphere spectrum in stable equivariant and motivic homotopy theory, respectively.  Moreover, the classical Galois correspondence induces a symmetric monoidal functor from Galois-equivariant spectra to motivic spectra \cite[Theorem 4.6]{heller/ormsby:2014}.  This functor restricts to one between highly structured normed ring spectra \cite[Proposition 10.8]{bachmann/hoyois:2018}.  In both categories, the sphere spectrum is a normed algebra, and the Dress map is the induced Tambara map between endomorphisms of the unit object.  We hope that a better understanding of this map will provide insight on the relation between equivariant and motivic homotopy theory provided by the Galois correspondence.
\subsection*{Outline}

\cref{sec:background} is dedicated to discussing Tambara functors and Tambara ideals, introducing the Burnside and Grothendieck-Witt rings as Tambara functors, and detailing the Dress map and the trace ideal for these Tambara functors. \cref{sec:A(Cn)} describes the behavior of the Burnside Tambara functor on a cyclic group, providing explicit formulas for the Tambara maps. In \cref{sec:TI(Cn)}, we calculate the trace ideal when $G$ is a cyclic group (\cref{thm:odd deg Cn extn}, \cref{thm:twopowers}, and \cref{thm:TI for Cn}). 
Building on this work, we consider the Galois groups $\mathbb{Z}_p$ and $\hat{\mathbb{Z}}$ in \cref{sec:profinite}. We show that the trace ideal for these profinite groups are colimits of the principal ideals of their finite cyclic counterparts (\cref{thm:p-adic trace ideal} and \cref{thm:alg closure}). Finally, \cref{sec:applications} applies these calculations to specific examples, including a complete description of the trace ideal for extensions of finite fields; in this case the trace ideal is strongly principal (\cref{thm:finite fields}). 
In \cref{app:HM}, H.~Chen and X.~Chen give formulas for the restriction, transfer, and norm of an arbitrary quadratic form over a finite field (\cref{thm:res and tr for finite field} and \cref{thm:main}).

\subsection*{Acknowledgements}
We extend a big thank you to Kyle Ormsby and Ang\'elica Osorno for their mentorship. This research was conducted as part of the 2019 Collaborative Mathematics Research Group (CMRG) at Reed College and generously funded by NSF grant DMS-1709302. Additional thanks goes to Jeremiah Heller and our fellow CMRG members Nick Chaiyachakorn, Nicholas Cecil, Harry Chen, and Xinling Chen for their suggestions and support. We would also like to thank the referee for their helpful comments and suggestions that improved our exposition. 

\section{Background: Tambara, Burnside, Grothendieck, Witt, and Dress}\label{sec:background}
In this section, we recall necessary background information on Tambara functors, the Burnside and Grothendieck-Witt rings, and Dress's trace homomorphism.

\subsection{Tambara functors, ideals, and generators}
First introduced as TNR functors\footnote{Transfer, Norm, Restriction.} by D. Tambara in \cite{tambara:1993}, Tambara functors are elaborations of Mackey functors which have multiplicative norm maps in addition to restrictions and transfers. These functors were originally defined only for finite groups, but have since been extended to the profinite case \cite{nakaoka:2009}.

\begin{definition}\label{defn:tamb functor}
Let $G$ be a profinite group. Let $G$Fin and Set denote the category of finite $G$-sets and the category of Sets respectively. A \textit{Tambara functor} $T$ on $G$ is a triple $(T^*, T_+, T_{\boldsymbol{\cdot}})$ where $T^*$ is a contravariant functor $G$Fin$\rightarrow$Set and $T_+, T_{\boldsymbol{\cdot}}$ are covariant functors $G$Fin$\rightarrow$ Set such that 
\begin{enumerate}
    \item $(T^*, T_+)$ is a Mackey functor on $G$,
    \item $(T^*, T_{\boldsymbol{\cdot}})$ is a semi-Mackey functor \footnote{A semi-Mackey functor is a Mackey functor in which $T(X)$ is assigned to a commutative monoid rather than an abelian group (cf. \cite{NAKAOKA2012}).} on $G$, and
    \item given an exponential diagram
    \begin{center}
    \begin{tikzcd}
    X \arrow{d}[swap]{f} & \arrow{l}[swap]{p} A & \arrow{l}[swap]{\lambda} Z \arrow{d}{\rho}\\
    Y & & B \arrow{ll}[swap]{q}
    \end{tikzcd}
    \end{center}
    in $G$Fin (in the sense of \cite{tambara:1993}), the diagram 
    \begin{center}
    \begin{tikzcd}
    T(X) \arrow{d}[swap]{T_\cdot(f)} & \arrow{l}[swap]{T_+(p)} T(A) \arrow{r}{T^*(\lambda)} &  T(Z) \arrow{d}{T_\cdot(\rho)}\\
    T(Y) & & T(B) \arrow{ll}[swap]{T_+(q)}
    \end{tikzcd}
    \end{center}
    commutes.
\end{enumerate}
For the sake of brevity, we use the notation $f_{+}:=T_{+}(f)$, $f_{\boldsymbol{\cdot}}:=T_{\boldsymbol{\cdot}}(f)$, and $f^*:=T^*(f)$.
\end{definition}

\begin{remark}
As is the case for Mackey functors, it suffices to specify how a Tambara functor behaves on transitive $G$-sets.  This observation prompts a second characterization of Tambara functors in terms of subgroups of $G$:

Let $G$ be a profinite group. A Tambara functor $T$ on $G$ is completely specified by a ring $T(G/H)$ for all open $H\leq G$ and the following maps for all open subgroups $L\leq H\leq G$:
\begin{enumerate}
    \item Restriction $\res{L}{H} := q^*$
    \item Transfer $\tr{L}{H} := q_+$
    \item Norm $\N{L}{H} := q_{\boldsymbol{\cdot}}$
    \item Conjugation $c_{g, H} := (c_g)^*$
\end{enumerate}
where $q\colon G/K\rightarrow G/H$ is the quotient map and $c_g\colon H \rightarrow H^g$ is  conjugation-by-$g$ and $H^g=g^{-1}Hg$. These maps must satisfy a number of compatibility conditions as specified M.~Hill and K.~Mazur in \cite{mazur:2019}. 
Notably, while restriction and conjugation are ring maps, transfer and norm only respect the additive and multiplicative structures, respectively.
\end{remark}

There are many analogies to draw between commutative ring theory and Tambara functor theory; the additional Tambara structure makes the study of ideals a particularly rich source of information.

\begin{definition}\label{defn:tamb ideal}
Let $T$ be a Tambara functor on $G$. An \textit{ideal} $\sr{I}$ of $T$ consists of a collection of standard ring-theoretic ideals $\sr{I}(G/H) \subseteq T(G/H)$ for each open $H\leq G$, such that for all open subgroups $L\leq H\leq G$
\begin{enumerate}
    \item $\res{L}{H}(\sr{I}(G/H)) \subseteq \sr{I}(G/L)$,
    \item $\tr{L}{H}(\sr{I}(G/L)) \subseteq \sr{I}(G/H)$,
    \item $\N{L}{H}(\sr{I}(G/L)) \subseteq \sr{I}(G/H) + \N{L}{H}(0)$,
    \item $c_{g, H}(\sr{I}(G/L)) \subseteq \sr{I}(G/{}^{g}\!H)$.
\end{enumerate}
\end{definition}

\begin{definition}\label{defn:gen tamb ideal}
Let $\mathcal{G}\subseteq \coprod_{G{\rm Fin}}T(X)$. The \textit{ideal generated by $\mathcal{G}$} is the intersection of all ideals of $T$ containing $\mathcal{G}$, denoted $\gent{\mathcal{G}}$. If $\mathcal{G}=\{a_1,\dots,a_n\}$ is finite, then we write $\gent{\mathcal{G}}=\gent{a_1,\dots,a_n}$.
\end{definition}

We will use the notation $(a)\subseteq T(X)$ to denote the ideal generated by $a\in T(X)$ in the standard ring-theoretic sense, whereas $\gent{a}\subseteq T$ denotes the \textit{Tambara} ideal generated by $a\in T(X)$.

\begin{definition}\label{defn:principal}
Let $\sr{I}$ be an ideal of $T$. We say $\sr{I}$ is \textit{principal} if $\sr{I}=\gent{a}$ for some $a\in T(X)$ where $X$ is a $G$-set. We say an ideal is \textit{strongly principal} if there exists an open subgroup $H\leq G$ and $a \in T(G/H)$ such that $\sr{I} = \gent{a}$.
\end{definition}

Nakaoka \cite[Proposition 3.6]{nakaoka:2011} shows that every finitely generated Tambara ideal is in fact principal, however the strongly principal condition is much more restrictive.

Just as ring-theoretic ideals can be produced by ring homomorphisms, Tambara ideals arise as the kernels of Tambara functor morphisms (see \cite[Proposition 2.10]{nakaoka:2011}). For more details regarding the theory of Tambara ideals, we point the reader to Nakaoka's paper \cite{nakaoka:2011}.

\begin{remark}\label{rmk:lvl description of gen ideal}
There is a particularly nice description of a Tambara ideal generated by a single element as shown in \cite[Definition 1.4]{nakaoka:2011}.
Let $T$ be a Tambara functor and let $a\in T(A)$ for some finite $G$-set $A$. Then $\gent{a}$ can be calculated as 
$$\gent{a}(X) = \{u_+(b\cdot(v_!(w^*(a))) \;|\; X\xleftarrow{u} C \xleftarrow{v} D \xrightarrow{w} A, b\in T(C)\}$$
where $v_! = v_{{\boldsymbol{\cdot}}} - v_{{\boldsymbol{\cdot}}}(0)$.
\end{remark}

\begin{definition}
Given two Tambara functors $T$ and $S$ on a profinite group $G$, a \textit{morphism of Tambara functors} $\varphi\colon T \rightarrow S$ is a collection of ring homomorphisms $\varphi_X\colon T(X)\rightarrow S(X)$ for all finite $G$-sets $X$ which form a natural transformation of each of the three component functors. The \textit{kernel} of $\varphi$ is given by the collection of kernels of the associated ring maps.
\end{definition}

\begin{remark}
Just as in standard ring theory, a surjective Tambara functor morphism $\varphi\colon T\rightarrow S$ yields the presentation $T/\ker(\varphi) \cong S$. As before, it suffices to specify $\varphi_H\colon T(G/H) \rightarrow S(G/H)$ on each open subgroup $H\leq G$. So then $\ker(\varphi)=\{\ker(\varphi_H)\}_{H\leq G}$.
\end{remark}

Our object of interest, the trace ideal, arises as the kernel of a Tambara functor morphism. The details of the morphism and the functors it maps between are detailed in the following two subsections.


\subsection{The Grothendieck-Witt and Burnside rings as Tambara functors}

The focus of our work is the \textit{Burnside Tambara functor} on $G$, denoted $\burn_G$, and the \textit{Grothendieck-Witt Tambara functor} on a field extension $K/F$ with Galois group $G$, denoted $\tamb{GW}_F^K$. This section recalls some basic definitions and results concerning these two functors. 

\begin{definition}[The Burnside ring] \label{defn:burnside ring}
The \textit{Burnside ring} on a group $G$ is the Grothendieck construction on the semi-ring of finite $G$-sets, denoted $A(G)$. That is, $A(G)$ is the ring of formal differences of isomorphism classes of finite $G$-sets, with addition given by disjoint union and multiplication given by Cartesian product.
\end{definition}

\begin{definition}[Burnside Tambara functor] \label{defn:burnside tamb functor}
For each open $H\leq G$, define $\burn_G(G/H)$ to be the Burnside ring of $H$. For open subgroups $L\leq H\leq G$, $g\in G$, $Y\in \burn_G(G/H)$, and $X\in\burn_G(G/L)$, we define the Tambara structure maps,\begin{align*}
    \res{L}{H}\colon \burn_G(G/H)&\longrightarrow \burn_G(G/L)\\
    Y &\longmapsto Y\text{ with restricted $L$-action,}\\
    \tr{L}{H}\colon \burn_G(G/L)&\longrightarrow \burn_G(G/H)\\
    X &\longmapsto H\times_L X,\\
    \N{L}{H}\colon \burn_G(G/L)&\longrightarrow \burn_G(G/H)\\
    X &\longmapsto {\rm Map}_L(H,X),\\
    c_{g,H}\colon \burn_G(G/H)&\longrightarrow \burn_G(G/H^g)\\
    X &\longmapsto X^g.
\end{align*}
When $G$ is Abelian, conjugation is trivial. These maps turn $\burn_G$ into the initial $G$-Tambara functor (cf. \cite{nakaoka:2014, tambara:1993}). We will denote $\burn_G$ by merely $\burn$ when the group is obvious from context.
\end{definition}

\begin{remark}
For any $L\leq H\leq G$, we have a natural isomorphism (cf. \cite[Remark 1.5]{nakaoka:2014}) $\burn_G(G/L)\cong \burn_H(H/L)$, and we will often identify the two through this isomorphism.
\end{remark}

We now turn to our second functor of interest, the Grothendieck-Witt functor. In order to work with this functor, we use some basic notation from the theory of quadratic forms. For a field $F$, let $F^{\times}$ be the multiplicative group of units and $F^{\XX}=\{x^2\mid x\in F^{\times}\}$ the group of squares, and so $F^{\times}/F^{\XX}$ denotes the group of square classes in $F$. The diagonal $F$-form $a_1x_1^2+\dots a_nx_n^2$ is denoted $\gen{a_1,\dots, a_n}$, with dimension $n$ and determinant $\prod_{i=1}^n a_iF^{\XX}\in F^{\times}/F^{\XX}$. For an in-depth treatment of quadratic forms, we point the reader to \cite{lam:2005}.

\begin{definition}[Grothendieck-Witt ring] \label{defn:GW ring}
Let $M(F)$ denote the set of isometry classes of regular quadratic forms over a field $F$. Equipped with the orthogonal sum and tensor product operations, $M(F)$ forms a semi-ring. The \textit{Grothendieck-Witt ring} of $F$ is the Grothendieck construction applied to this semi-ring, denoted $GW(F)$. A typical element of $GW(F)$ is a formal difference of isometry classes of quadratic forms.
\end{definition}

\begin{definition}[Grothendieck-Witt Tambara functor] \label{defn:GW tamb functor}
For each open subgroiup $H\leq G$, define $\tamb{GW}_F^K(G/H)$ to be the Grothendieck-Witt ring on $K^H$. For convienence we will often index the the Grothendieck-Witt Tambara functor by the field $K^H$ instead of the $G$-set $G/H$ under the correspondance $\tamb{GW}_F^K(G/H) = \tamb{GW}_F^K(K^H) = GW(K^H)$. 

For subextensions $F\subseteq L\subseteq E\subseteq K$, the restriction map $\res{L}{E}:\tamb{GW}_F^K(L)\longrightarrow\tamb{GW}_F^K(E)$ is given by an extension of scalars. Equipped with the Scharlau transfer of the field trace and the Rost norm \cite{rost:2003}, the Grothendieck-Witt ring naturally exhibits the structure of a Galois Tambara functor (cf. \cite[\S 3]{bachmann:2016}). Again, we denote $\tamb{GW}_F^K$ by $\tamb{GW}$ when the extension is obvious from context.
\end{definition}


\subsection{The Dress map and the trace ideal}\label{subsec:Dress and TI}

As shown in \cite{dress:1971}, there is a ring homomorphism $D_G$ between the Burnside ring on $G$ and the Grothendieck-Witt ring on $F$ which sends a transitive $G$-set $G/H$ to the trace form $\tr{F}{K^H}\gen{1} = \gen{K^H}$. Similarly, for each open $H\leq G$, we have a ring homomorphism $\Dr_H\colon \burn(G/H)\to \tamb{GW}_F^K(K^H)$. We define the \emph{Dress map} $\Dr: \burn_G \rightarrow \tamb{GW}_F^K$ as the collection of these ring homomorphisms. To see that the Dress map is indeed a Tambara functor morphism, first recall that $\burn_G$ is the initial element in the category of Tambara functors on $G$. As such, there must be a unique Tambara functor morphism $\Phi:\burn_G\rightarrow \tamb{GW}_F^K$. Now note that for all $L\leq H\leq G$ we have $H/L = \tr{L}{H}(L/L)$, and so it follows that \[\Phi(H/L) = \Phi(\tr{L}{H}(L/L)) = \tr{K^L}{K^H}(\Phi(L/L)) = \tr{K^L}{K^H}\gen{1}.\] This shows that $\Phi$ is indeed the Dress map, and hence $\Dr$ is a Tambara functor morphism. 

\begin{remark}
\label{rem:surjective}
A Tambara functor morphism is said to be surjective if the ring maps at each level are surjective. The conditions for the Dress map to be surjective at each level are given by \cite[Appendix B, Theorem 3.1]{dress:1971}.
It follows that the Dress map is a surjective Tambara functor morphism if and only if $K$ is quadratically closed. 
\end{remark}

\begin{definition}\label{defn:TI}
The \textit{trace ideal}, denoted $\TI{K}{F}$, is the Tambara ideal $\ker(\Dr\colon\burn_G\to \tamb{GW}_{F}^{K})$. When the extension is clear, we drop the decoration. 
\end{definition}

\begin{remark}\label{rmk: TI G/e}
If $G$ is a finite group, then $\TI{K}{F}(G/e) =(0)$.
\end{remark}

Note that \cref{rmk:lvl description of gen ideal} allows us to calculate the classical trace ideal just in case $\TI{K}{F}$ is principal. To gain insight into the trace ideal, we observe that for every sub-extension $F\subseteq E\subseteq K$, we have the ring homomorphism $\dim_E\colon GW(E) \rightarrow \ZZ$ which sends a quadratic form to its dimension over $E$. These maps naturally assemble into the Tambara functor morphism $\dim\colon\tamb{GW}_F^K \rightarrow \tamb{\ZZ}$ given by $\dim = \{\dim_{K^H}\}_{H \leq G}$. Here $\tamb{\ZZ}$ denotes the constant Tambara functor on $\ZZ$ given by $\tamb{\ZZ}(G/H) = \ZZ$ and 
\begin{align*}
    \res{L}{H}(a) &= a, \\
    \tr{L}{H}(a) &= |H:L|a, \\
    \N{L}{H}(a) &= a^{|H:L|}, \\
    c_{g, H}(a) &= a
\end{align*}
for all open $L \leq H \leq G$, $a\in\ZZ$, and $g\in G$. 
Similarly, for all $H\leq G$ there is a ring homomorphism $\card_H\colon A(H) \to \ZZ$ sending a finite $H$-set to its cardinality. We then have the Tambara functor morphism $\card\colon \burn_G \rightarrow \tamb{\ZZ}$ given by $\card = \{\card_H\}_{H\leq G}$. Note that $\card = \dim\circ\Dr$, which implies that $\TI{K}{F}$ is a sub-ideal of the kernel of $\card$. When the order of $G$ is odd, we have a stronger result.

\begin{theorem}\label{thm:odd deg ker card}
If the order of $G$ is odd,
\begin{center}\begin{tikzcd}
\TI{K}{F} = \ker(\card).
\end{tikzcd}\end{center}
\end{theorem}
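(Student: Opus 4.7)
The inclusion $\TI{K}{F} \subseteq \ker(\card)$ is immediate from the factorization $\card = \dim\circ\Dr$ observed just above. For the reverse inclusion, the plan is to prove level-wise: for each open $H\leq G$, I would show $\ker(\card_H)\subseteq \ker(\Dr_H)$ as ideals of $\burn(G/H)$. Since both $\TI{K}{F}$ and $\ker(\card)$ are Tambara ideals (kernels of Tambara morphisms), such a level-wise equality upgrades to an equality of Tambara ideals.

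At level $G/H$, I would exhibit a factorization $\Dr_H = \iota_H\circ\card_H$, where $\iota_H\colon \ZZ\to GW(K^H)$ is the unique ring map, given by $\iota_H(1)=\langle 1\rangle$. Since $\dim\circ\iota_H=\mathrm{id}_\ZZ$, the map $\iota_H$ is injective, so such a factorization automatically yields $\ker(\Dr_H)=\ker(\card_H)$. As $\burn(G/H)$ is generated as an abelian group by the transitive $H$-sets $H/L$ for $L\leq H$, and both $\Dr_H$ and $\iota_H\circ\card_H$ are additive, verifying the factorization reduces to the single trace-form identity
\[
\tr{K^H}{K^L}\langle 1\rangle \;=\; [H:L]\cdot\langle 1\rangle \quad\text{in } GW(K^H), \quad\text{for every } L\leq H.
\]

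To prove this identity I would appeal to Serre's classical theorem that for a Galois extension $E/F$ of odd degree $n$, $\tr{F}{E}\langle 1\rangle = n\cdot\langle 1\rangle$ in $GW(F)$. Since $|G|$ is odd, so are $|H|$ and $|L|$, and both $K/K^H$ and $K/K^L$ are Galois of odd degree. Transitivity of the Scharlau transfer along the tower $K\supseteq K^L\supseteq K^H$ then gives
\[
|L|\cdot \tr{K^H}{K^L}\langle 1\rangle \;=\; \tr{K^H}{K^L}\bigl(\tr{K^L}{K}\langle 1\rangle\bigr) \;=\; \tr{K^H}{K}\langle 1\rangle \;=\; |H|\cdot \langle 1\rangle \;=\; |L|\cdot [H:L]\cdot \langle 1\rangle.
\]
Hence $x \defeq \tr{K^H}{K^L}\langle 1\rangle - [H:L]\langle 1\rangle \in GW(K^H)$ satisfies $|L|\cdot x = 0$ and $\dim x = 0$. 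Under the injection $GW(K^H)\hookrightarrow W(K^H)\oplus\ZZ$ by (Witt class, dimension), the Witt class of $x$ is annihilated by the odd integer $|L|$; since every torsion element of $W(K^H)$ is $2$-primary (Pfister), the Witt class of $x$ must vanish, and combined with $\dim x = 0$ this forces $x = 0$ in $GW(K^H)$.

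The main obstacle is Serre's trace-form theorem for odd-degree Galois extensions; once that classical input is granted, the remainder is routine Tambara-ring bookkeeping together with the standard torsion structure of Witt rings.
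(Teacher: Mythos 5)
Your proof is correct, and it takes the same overall route as the paper's: both reduce to showing that $\Dr_H$ sends a sum $\sum_L m_L\, H/L$ to $\bigl(\sum_L m_L |H:L|\bigr)\gen{1}$, which comes down to the trace-form identity $\tr_{K^L/K^H}\gen 1 = |H:L|\gen 1$ for each $L\le H$. The paper simply cites this identity for the (odd-degree) sub-extension $K^L/K^H$. You instead derive it from the case of the genuinely Galois extensions $K/K^L$ and $K/K^H$ via transitivity of the Scharlau transfer together with Pfister's theorem that Witt-ring torsion is $2$-primary. This is a worthwhile refinement: the naive identity $\tr_{E/F}\gen 1 = [E:F]\gen 1$ fails for arbitrary odd-degree separable extensions (e.g.\ $\QQ(2^{1/3})/\QQ$ has trace form $\gen{3,1,-1}\not\cong 3\gen 1$), and if $G$ has odd order but is non-abelian then $L$ need not be normal in $H$, so $K^L/K^H$ need not be Galois. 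Your tower argument shows the identity \emph{does} hold for sub-extensions of an odd-order Galois extension, which is exactly what is needed, and so your version of the argument applies cleanly to the theorem as stated for arbitrary odd-order $G$. One small notational mismatch: the paper writes $\tr{K^L}{K^H}$ for the transfer $GW(K^L)\to GW(K^H)$ (subscript = source), whereas you write $\tr{K^H}{K^L}$ for the same map (subscript = target); the mathematics is unaffected.
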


\begin{proof}
When the order of $G$ is odd, it is well-known that $\tr{F}{K}\gen{1}=\abs{K:F}\gen{1}$ (cf. \cite{lam:2005}). For $H\leq G$, an arbitrary element of $\burn(G/H)$ is of the form $X = \sum_{L\leq H}m_LH/L$. Since every sub-extension of an odd degree extension is itself an odd degree extension, we have \begin{align*}
    \Dr_{H}(X)&=\sum_{L\leq H} m_L \tr{K^{L}}{K^{H}}\gen{1}\\
    &= \bigg(\sum_{L\leq H}m_L|H:L|\bigg)\gen{1} 
\end{align*} 
for all $H\leq G$,
implying 
$$X \in \TI{K}{F}(G/H) \iff \sum_{L\leq H}m_L|H:L| = \card(X) = 0.$$
\end{proof}
\begin{remark}\label{rmk:odd deg TI prime}
In this case, $\TI{K}{F}$ is a prime ideal of $\burn_G$ as defined in \cite{nakaoka:2011}. This observation follows from \cite[Corollary 4.29]{nakaoka:2011} and the fact that $\tamb{\ZZ}$ is a domain-like Tambara functor.
\end{remark}

With reference to \cref{footnote:abs TI}, we also define the absolute trace ideal of the Dress Tambara functor morphism. 

\begin{definition}\label{defn:abs TI}
Let $G$ be a (profinite) group. The \textit{absolute trace ideal} is the Tambara ideal given by \[
\mathcal{T}_G = \bigcap \TI{K}{F},
\] where the intersection (in the sense of \cite[\S 3.1]{nakaoka:2011}) ranges over all Galois extensions $K/F$ with Galois group $G$.
\end{definition}

\section{The Burnside Tambara Functor for a Cyclic Group}\label{sec:A(Cn)}
For $N\in \NN$, let $C_N$ denote the cyclic group of order $N$. To establish some preliminary results for the Burnside Tambara functor on $C_N$, we introduce the following notation.

\begin{definition}
Let $C_K\leq C_M\leq C_N$,  and $k=\abs{C_M:C_K}$.
Define $t_{M, k}$ to be the transitive $C_M$-set with cardinality $k$,
$$t_{M,k}:=C_M/C_K\in\burn_{C_N}(C_N/C_M).$$ We will drop the first index when it can be inferred from context.
\end{definition}
Adopting this notation, multiplication of $C_M$ sets is given by the formula
\begin{equation}\label{eqn:mult}
t_{k}t_{j} = \text{gcd}(k, j)t_{\text{lcm}(k, j)},
\end{equation}
as can be derived by considering the pullback diagram over $C_M/C_K$ and $C_M/C_J$. The element $t_{1}$ is the multiplicative identity and so will be written as $1$. 

The formulas for the restriction and transfer maps are relatively straight-forward upon consideration of the maps given in \cref{defn:burnside tamb functor}. For arbitrary elements $X=\sum_{i\mid K} a_it_{i}\in \burn(C_N/C_K)$ and $Y=\sum_{i\mid M} b_it_{i}\in \burn(C_N/C_M)$, we have\begin{align}
    \res{K}{M}(Y):=& \res{C_K}{C_M}(Y)=\sum_{i\mid M} b_id_it_{ \frac{i}{d_i}},\\
    \tr{K}{M}(X) :=& \tr{C_K}{C_M}(X) = \sum_{i\mid K} a_it_{ik},
\end{align} where $d_i={\rm gcd}(i,k)$. Conjugation is trivial. By an application of \cite[Definition 7.2 and Corollary 7.6]{nakaoka:2014}, we obtain the formula for the norm
\begin{equation}\label{eqn:norm}
    \N{K}{M}(X):=\N{C_K}{C_M}(X) = \sum_{i\mid M}\frac{C(i)}{i}t_{i},
\end{equation}
where
\[C(i) = \Bigg(\sum_{j\mid \frac{{\rm lcm}(i, k)}{k}}ja_j\Bigg)^{{\rm gcd}(i, k)} - \sum_{j \mid i,\; j < i}C(j).\]

Note that we always have $C(1)=a_1$. Using these explicit formulas, we can establish some useful lemmas regarding an arbitrary ideal $\sr{I}\subseteq \burn_{C_N}$.

\begin{lemma}\label{lem:prime-gens}
Let $C_K\leq C_M\leq C_N$ and suppose that $n(t_p-p)\in\sr{I}(C_N/C_K)$ for some odd prime $p$ and $n\in \NN$. Let $p^j$ be the largest power of $p$ dividing $\frac M K$. Then for all $0\leq i\leq j$,
$n(t_{p^{i+1}} - p^{i+1})\in\sr{I}(C_N/C_M)$. In particular we always have $n(t_p-p)\in\sr{I}(C_N/C_M)$. 
\end{lemma}

\begin{proof}
We proceed by induction on $i$. For the base case $i=0$, we wish to show that $n(t_p-p) \in \sr{I}(C_N/C_M)$. One can easily perform induction on the number of prime divisors of $\frac MK$, so it suffices to prove the base case for $\frac MK = q$ for some prime $q$. In the case $q = p$, an application of \cref{eqn:norm} gives $$\N{K}{M}(n(t_p-p)) = n^pp^{p-2}t_{p^2} -(n^pp^{p-1}-n)t_p - np.$$  Hence \begin{align*}
    \N{K}{M}(n(t_p-p)) - &n^pp^{p-2}\tr{K}{M}(n(t_p-p)) = n(t_p-p) \in \sr{I}(C_N/C_M),
\end{align*}
as desired. When $q \neq p$, applying \cref{eqn:norm} yields
$$\N{K}{M}(n(t_p-p)) = \frac{n^q(-p)^{q-1}-n}{q}t_{qp}+\frac{(-pn)^q+np}{q}t_{q} + nt_p-np.$$
Therefore
$$\N{K}{M}(n(t_p-p)) - \frac{n^q(-p)^{q-1}-n}{q}\tr{K}{M}(n(t_p-p)) = n(t_p-p) \in \sr{I}(C_N/C_M),$$
which proves the base case. Now let $1\leq i\leq j$ and assume the claim holds for all $k < i-1$. Then in particular we have that $n(t_p-p) \in \sr{I}(C_N/C_{M})$ and $n(t_{p^i} - p^i) \in \burn(C_N/C_{M/p})$. Thus we have $$\tr{{M/p}}{M}(n(t_{p^i} - p^i)) + p^in(t_p-p) = n(t_{p^{i+1}} - p^{i+1}) \in \sr{I}(C_N/C_{M})$$ for all $1\leq i \leq j$, which completes the proof.  
\end{proof}

\begin{lemma}\label{lem:2gens}
Let $C_K\leq C_M\leq C_N$ and suppose that $t_4-t_2-2\in\sr{I}(C_N/C_K)$. Let $2^j$ be the largest power of $2$ dividing $\frac M K$. Then for all $1\leq i\leq j+2$,
$t_{2^i} +t_2 - (2^{i}+2)\in\sr{I}(C_N/C_M)$. Furthermore, $2t_2-2\in \sr{I}(C_N/C_{K/2})$.
\end{lemma}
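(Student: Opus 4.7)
The plan is to adapt the strategy of the proof of \cref{lem:prime-gens}: first lift the generator $t_4 - t_2 - 2$ from level $C_K$ up to level $C_M$ via a combination of norms and transfers, and then extract the specified relations $t_{2^i} + t_2 - (2^i + 2)$ by algebra inside $\burn(C_N/C_M)$, drawing on transfers of the generator from the intermediate levels $C_{M/2^a}$ to access higher $t_{2^i}$. The `furthermore' clause will fall out immediately by applying restriction to $t_4 - t_2 - 2$.

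For the lifting, I would induct on the number of prime factors of $M/K$, so it suffices to treat $M = qK$ for a single prime $q$. Writing $X := t_4 - t_2 - 2$, a direct computation with \cref{eqn:norm} gives $\N_K^M(X) + 2\tr_K^M(X) = t_4 - t_2 - 2$ in $\burn(C_N/C_M)$ when $q = 2$, so, using $\N_K^M(0) = 0$ and the ideal axioms of \cref{defn:tamb ideal}, the conclusion follows. For $q$ odd, I expect the identity
\[
\N_K^M(X) - \tfrac{4^{q-1} - 1}{q}\, \tr_K^M(X) + 2^{q-2}\cdot\tfrac{2^{q-1} - 1}{q}\, \tr_K^M(2t_2 - 4) = t_4 - t_2 - 2,
\]
whose coefficients are integers by Fermat's little theorem. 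The first two terms eliminate the $t_{4q}$-component of $\N_K^M(X)$ but leave a residue proportional to $2t_q - t_{2q}$, and the third term cancels this residue; its argument $2(t_2 - 2) \in \sr{I}(C_N/C_K)$ is supplied by the ring-theoretic identity $(t_4-t_2-2)\cdot t_2 - 2(t_4-t_2-2) = -(2t_2-4)$ (using $t_2^2 = 2t_2$ and $t_4 t_2 = 2t_4$).

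With $t_4 - t_2 - 2 \in \sr{I}(C_N/C_M)$ in hand, set $S_i := t_{2^i} + t_2 - (2^i+2)$. The identity above gives $S_1 = 2t_2 - 4 \in \sr{I}(C_N/C_M)$, and $S_2 = (t_4-t_2-2) + S_1 \in \sr{I}(C_N/C_M)$. For $i \geq 3$ (only relevant when $j \geq 1$), I bring in the auxiliary elements $R_a := \tr_{M/2^a}^M(t_4-t_2-2) = t_{2^{a+2}} - t_{2^{a+1}} - 2t_{2^a}$ for $0 \leq a \leq j$, which lie in $\sr{I}(C_N/C_M)$ because the same lifting argument places $t_4-t_2-2$ in $\sr{I}(C_N/C_{M/2^a})$. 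A direct check verifies the recursion
\[
S_i = R_{i-2} + S_{i-1} + 2S_{i-2} - S_1,
\]
and induction on $i$ closes out the main statement. Finally, applying the restriction formula $\res_{K/2}^K(t_i) = \gcd(i,2)\, t_{i/\gcd(i,2)}$ to $t_4 - t_2 - 2$ yields $\res_{K/2}^K(t_4 - t_2 - 2) = 2t_2 - 4 = 2(t_2-2) \in \sr{I}(C_N/C_{K/2})$, which is the `furthermore' clause (taking $2t_2 - 2$ in the statement to mean $2(t_2-2)$).

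The main obstacle is the odd-prime case of the lifting step: no single integer combination $\N + \alpha\tr$ of the generator's norm and transfer can simultaneously annihilate all three coefficients on $t_q$, $t_{2q}$, $t_{4q}$ arising from the norm, so one has to pair the combination with the transfer of the auxiliary relation $2(t_2-2)\in\sr{I}(C_N/C_K)$ to cancel the residue. Integrality of the coefficients rests essentially on Fermat's little theorem.
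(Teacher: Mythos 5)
Your proof is correct and follows essentially the same route as the paper's: restriction for the ``furthermore'' clause, the ring identity $(2-t_2)(t_4-t_2-2)=2t_2-4$ to seed $S_1$, prime-by-prime lifting of $t_4-t_2-2$ to level $C_M$ via norm/transfer combinations (your single identity for odd $q$ tidily packages the paper's two-step argument that first lifts $2t_2-4$ and then corrects the norm of $t_4-t_2-2$), and an induction to produce the $t_{2^i}+t_2-(2^i+2)$ by pairing transfers $\tr{M/2^a}{M}(t_4-t_2-2)$ with lower $S_k$. Two small points confirming your care: you correctly read the statement's ``$2t_2-2$'' as $2(t_2-2)=2t_2-4$, and your $q=2$ identity $\N{K}{M}(X)+2\tr{K}{M}(X)=X$ has the right sign---a direct application of \cref{eqn:norm} gives $\N{K}{M}(t_4-t_2-2)=-2t_8+3t_4+3t_2-2$, so the paper's displayed norm $2t_8-t_4+3t_2-2$ and the combination $\N-2\tr$ both contain slips that your version avoids.
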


\begin{proof}
To see that $2t_2-2\in \sr{I}(C_N/C_{K/2})$, we observe that $\res{C_{K/2}}{C_K}(t_4-t_2-2) = 2t_2-2$. The rest of the claim follows by induction on $i$. For $C_M = C_K$, note that 
$$(2-t_2)(t_4-t_2-2) = 2t_2-2\in \sr{I}(C_N/C_K),$$
and therefore
$$(t_4-t_2-2) + 2t_2-2 = t_4+t_2-8\in \sr{I}(C_N/C_K).$$
As before, induction on the number of prime divisors of $\frac MK$ renders it sufficient to show that the base case holds for $M/K = q$ prime. Moreover, the above arguments imply that we need only show $t_4-t_2-2\in\burn(C_N/C_M)$. 

If $q = 2$, an application of \cref{eqn:norm} gives
$$\N{K}{M}(t_4-t_2-2) = 2t_8 - t_4 + 3t_2 - 2.$$
Thus
$$\N{K}{M}(t_4-t_2-2) - 2\tr{K}{M}(t_4-t_2-2) = t_4-t_2-2\in\burn(C_N/C_M),$$ as desired. When $q \neq 2$, we have 
$$ \N{K}{M}(2t_2-4) = 2t_2-4 + \frac{4^{q}-4}{q}t_q - \frac{4^q - 4}{2q}t_{2q},$$
and so
$$2t_2-4 = \N{K}{M}(2t_2-4) - \frac{4^{q-1}-1}{q}\tr{K}{M}(2t_2-4) \in \sr{I}(C_N/C_M)$$ and 
$$\N{K}{M}(t_4-t_2-2) = t_4 - t_2 -2 + \frac{-2^q+2}{q}t_q + \frac{-4^q + 2^q + 2}{2q}t_{2q} + \frac{4^q-4}{4q}t_{4q}.$$
Thus $t_4-t_2-2\in\sr{I}(C_N/C_M)$ if and only if
$$X := \frac{-2^q+2}{q}t_q + \frac{-4^q + 2^q + 2}{2q}t_{2q} + \frac{4^q-4}{4q}t_{4q} \in \sr{I}(C_N/C_M)$$
Furthermore, $X \in \sr{I}(C_N/C_M)$ if and only if
\begin{align*}
    Y &:= X - \frac{4^q-4}{4q}\tr{K}{M}(t_4-t_2-2)\\
      &= \frac{-4^q+2\cdot 2^q}{4q}t_{4q} + \frac{2\cdot4^q-4\cdot 2^q}{4q}t_{4q}\\
      &= \frac{-2^q}{4}\frac{2^{q-1}-1}{q} t_q ( 2t_2 - 4) \in \sr{I}(C_N/C_M).
\end{align*}
However, we know this element is in $\sr{I}(C_N/C_M)$ since we have already shown that the factor $2t_2 - 4 \in \sr{I}(C_N/C_M)$. Hence $t_4-t_2-2 \in \sr{I}(C_N/C_M)$ and the case $i = 0$ holds. 

Now let $1\leq i \leq j+2$ and suppose the claim holds for all $k < i-1$. We need to show that $t_{2^{i}} + t_2 - (2^{i}-2) \in \sr{I}(C_N/C_M)$. By the inductive hypothesis, we know $t_{2^{i-1}} + t_2 - (2^{i-1}-2) \in \sr{I}(C_N/C_{M/2})$ and $t_4-t_2-2\in \sr{I}(C_N/C_M)$. Therefore
\begin{align*}
    \tr{M/2}{M}(t_{2^{i-1}} + &t_2 - (2^{i-1}+2)) - (t_4-t_2-2) + (2^{i-2}+1)(2t_2 - 4) \\
    &= t_{2^{i}} + t_2 -2^i- 2 \in \sr{I}(C_N/C_M),
\end{align*}
completing the proof. 
\end{proof}


\section{The Trace Ideal}\label{sec:TI(Cn)}
The following section calculates the trace ideal $\TI{K}{F}$ for finite cyclic extensions and their profinite counterparts. In particular, we prove that finite extensions produce a principal trace ideal, and the profinite cases are colimits of these principal ideals. Our results coincide with those given by Epkenhans \cite[Proposition 5]{epkenhans:1998}, who calculates the classical absolute trace ideal $\mathcal{T}(G/G)$ for cyclic $2$-groups.

\subsection{Odd degree extensions} 
In the case of odd degree cyclic extensions, we can give a very nice presentation of the trace ideal. Taking advantage of the Tambara structure of the trace ideal allows us to greatly simplify our presentation. In particular, seen as a Tambara ideal of $\burn_{C_N}$, $\TI{K}{F}$ is strongly principal.

\begin{theorem}\label{thm:odd deg Cn extn}
Let $G=C_N$, where $N$ is odd with prime decomposition $p_1^{e_1}\cdots p_s^{e_s}$. Then the trace ideal has the level-wise description \begin{align*}
\TI{K}{F}(C_N/C_M) &= (t_i - i~:~ i\mid M)\\
&= (t_{p^k}-p^k ~:~ p \text{ prime, } p^k\mid M),
\end{align*} for all $C_M\leq C_N$. Moreover, $\TI{K}{F}$ is generated as a Tambara ideal by the element $$X := \sum_{i=1}^s (t_{p_i} - p_i)\in \burn(C_N/C_{\hat n})$$
where $\hat{n} = p_1\cdots p_s$
\end{theorem}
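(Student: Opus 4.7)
The plan is to first establish the level-wise description of $\TI{K}{F}(C_N/C_M)$ and then show that the single element $X$ generates the resulting Tambara ideal.

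For the level-wise description, I would invoke \cref{thm:odd deg ker card} to replace $\TI{K}{F}(C_N/C_M)$ with $\ker(\card_M)$. Every element of $\burn(C_N/C_M)$ has a unique expression $Y = \sum_{i\mid M} m_i t_i$, and $\card_M(Y) = \sum_{i\mid M} m_i \cdot i$, so $Y \in \ker(\card_M)$ if and only if $Y = \sum_i m_i (t_i - i)$. This gives the first equality. For the reduction to prime powers, the multiplication formula \cref{eqn:mult} yields the identity
\[ t_{kj} - kj \;=\; (t_k - k)\,t_j + k(t_j - j) \qquad \text{whenever } \gcd(k,j) = 1,\]
which by iteration rewrites any $t_i - i$ in terms of the prime-power generators $t_{p^k} - p^k$ with $p^k \| i$.

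The inclusion $\gent{X} \subseteq \TI{K}{F}$ is immediate once one computes $\card_{\hat n}(X) = \sum_i (p_i - p_i) = 0$, so that $X \in \TI{K}{F}(C_N/C_{\hat n})$ and minimality of $\gent{X}$ finishes the inclusion.

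For the reverse containment, the key idea is to extract each $t_{p_j} - p_j$ from $X$ via the restriction map. Applying the restriction formula to $\res{C_{p_j}}{C_{\hat n}}(X)$ with $k = \hat n/p_j$: for $i \neq j$, distinctness of primes gives $p_i \mid \hat n/p_j$ so $\gcd(p_i, \hat n/p_j) = p_i$ and $\res{C_{p_j}}{C_{\hat n}}(t_{p_i}) = p_i \cdot t_1 = p_i$; for $i = j$, we have $\gcd(p_j, \hat n/p_j) = 1$ so $\res{C_{p_j}}{C_{\hat n}}(t_{p_j}) = t_{p_j}$. The off-diagonal contributions cancel against the constants $-p_i$, leaving $\res{C_{p_j}}{C_{\hat n}}(X) = t_{p_j} - p_j \in \gent{X}(C_N/C_{p_j})$. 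Now \cref{lem:prime-gens} (applied with $n=1$) propagates this to $t_{p_j^k} - p_j^k \in \gent{X}(C_N/C_M)$ for every $M$ with $p_j^k \mid M$. Combined with the level-wise description, this gives $\TI{K}{F}(C_N/C_M) \subseteq \gent{X}(C_N/C_M)$ for all $M \mid N$, completing the proof.

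The main obstacle is the restriction computation in the final paragraph: the particular choice of $X$ as the sum over distinct primes is precisely what causes the off-diagonal terms to cancel, and reliance on the pairwise coprimality of the $p_i$ is what makes the Tambara ideal strongly principal rather than merely finitely generated. Everything else reduces to straightforward accounting with the structure maps and an appeal to \cref{lem:prime-gens}.
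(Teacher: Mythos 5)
Your proposal is correct and follows essentially the same route as the paper: invoke \cref{thm:odd deg ker card} for the level-wise description, note the coprime-factorization reduction via \cref{eqn:mult}, extract each $t_{p_j}-p_j$ from $X$ by restriction to $C_{p_j}$, and propagate upward with \cref{lem:prime-gens}. The only difference is that you spell out the restriction computation and the identity $t_{kj}-kj = (t_k-k)t_j + k(t_j-j)$ explicitly where the paper leaves them as immediate observations.
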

\begin{proof}
First note that any divisor $i\mid M$ must be odd. By \cref{thm:odd deg ker card},
\[
  \TI{K}{F}(C_N/C_M) = \left\{X=\sum_{i\mid M}a_it_{i} ~:~ \sum_{i\mid M}ia_i=0\right\}.
\]
Clearly then $t_i-i\in \TI{K}{F}(C_N/C_M)$ for any $i\mid M$. Moreover, for any $X=\sum_{i\mid M}a_it_{i}\in \TI{K}{F}(C_N/C_M)$, we have \[
X - \sum_{\substack{i\mid M\\ i\neq 1}}a_i(t_i-i)=a_1 +\sum_{\substack{i\mid M\\ i\neq 1}} ia_i =0,
\] which proves the first equality. The second equality follows from the fact that $t_jt_k = t_{jk}$ for $j, k$ relatively prime. To show $\TI{K}{F} = \gent{X}$ first note that for all $1\leq i\leq s$ we have $\res{p_i}{\hat{n}}(X) = t_{p_i}-p_i$. Applying \cref{lem:prime-gens} gives us the rest of the generators from the second equality. 
\end{proof}


\subsection{Cyclic 2-extensions} Having determined a single generator for the trace ideal for odd degree cyclic extensions, we will now consider cyclic 2-extensions. We find that the trace ideal for these extensions is still principal, but not always strongly principal. Nor is the ideal prime, as can be observed by comparing our result with the spectrum of cyclic $p$-groups calculated by Nakaoka \cite{nakaoka:2014}.

\begin{definition}\label{defn:tau}
Let $F$ be a field and $\alpha\in F^\times$. Define $\tau_F(\alpha)$ as
$$\tau_F(\alpha) = \left\{\begin{array}{cc}
         0 & \alpha \text{ not a sum of squares} \\
         1 & \alpha \in F^{\XX}\\
         2^n & \alpha \in D_F(2^n)\setminus D_F(2^{n-1}),\; n\geq 1
         \end{array}\right.$$
    where $D_F(m)$ is the set of sums of $m$ squares in $F$.
Hence $\tau_F(\alpha)$ is the least power of $2$ such that $\alpha$ is a sum of that many squares in $F$.
\end{definition}

\begin{remark}
The notation $D_F(m)$ is a derivative of the notation $D(q)$ for the set of all elements represented by a quadratic form $q$. Thus, we will use $D_F(m)$ and $D(m\langle 1\rangle_F)$ interchangeably.
\end{remark}

Note that $\tau_F(\alpha) = 0$ if and only if there is an ordering of $F$ such that $\alpha$ is negative (cf. \cite[p.378]{lam:2005}).

\begin{proposition}
If $\tau_F(\alpha) \neq 0$, then $\tau_F(\alpha)$ is the additive order of $\gen{1}-\gen{\alpha}$ in $GW(F)$. Moreover, $\tau_F(\alpha)$ is zero if and only if $\gen{1}-\gen{\alpha}$ has infinite order. 
\end{proposition}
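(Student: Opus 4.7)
The plan is to reinterpret $n(\langle 1\rangle - \langle\alpha\rangle) = 0$ in $GW(F)$ as an isometry statement, and then to deduce the result from three classical facts about quadratic forms: Witt cancellation, the multiplicative property of Pfister forms, and Pfister's theorem that torsion in $W(F)$ is $2$-primary.

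First, I would observe that since $\mathrm{char}(F)\neq 2$, Witt cancellation holds, so $GW(F)$ is the honest Grothendieck group of the cancellative monoid of isometry classes. In particular, for $n\geq 1$ the equality $n\langle 1\rangle - n\langle\alpha\rangle = 0$ in $GW(F)$ is equivalent to the isometry $n\langle 1\rangle \cong n\langle\alpha\rangle$. Thus the additive order of $\langle 1\rangle - \langle\alpha\rangle$ equals the least $n\geq 1$ with $n\langle 1\rangle \cong n\langle\alpha\rangle$, whenever such an $n$ exists.

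Second, when $n = 2^k$ the form $2^k\langle 1\rangle$ is the Pfister form $\langle\langle 1,\dots,1\rangle\rangle$. For a Pfister form $\phi$, the set of represented elements equals the group of similarity factors; hence $2^k\langle 1\rangle \cong \langle\alpha\rangle\cdot 2^k\langle 1\rangle = 2^k\langle\alpha\rangle$ if and only if $2^k\langle 1\rangle$ represents $\alpha$, i.e.\ $\alpha\in D_F(2^k)$. This immediately produces one direction: if $\tau_F(\alpha) = 2^k \neq 0$, then $2^k(\langle 1\rangle - \langle\alpha\rangle) = 0$ in $GW(F)$, so the order divides $\tau_F(\alpha)$.

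For the reverse direction, I would invoke Pfister's theorem that the torsion subgroup of $W(F)$ is $2$-primary; since $GW(F) \cong W(F) \oplus \mathbb{Z}\cdot H$ (with $H$ the hyperbolic plane) and $\langle 1\rangle - \langle\alpha\rangle$ has dimension zero, the same $2$-primary conclusion applies in $GW(F)$. Thus any finite order must be a power of $2$, and by the Pfister-form step above that power of $2$ is the smallest $2^k$ with $\alpha\in D_F(2^k)$, which is exactly $\tau_F(\alpha)$. The \emph{moreover} statement then follows formally: $\tau_F(\alpha) = 0$ means $\alpha\notin D_F(2^k)$ for every $k$, so $\langle 1\rangle - \langle\alpha\rangle$ has no finite order, and conversely if $\tau_F(\alpha)\neq 0$ then the first part gives it finite order $\tau_F(\alpha)$.

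The main (minor) obstacle is bookkeeping between $GW(F)$ and $W(F)$ so that the $2$-primary torsion statement transfers, and locating the precise references in Lam (already cited) for the multiplicative property of $2^k\langle 1\rangle$ and for the structure of torsion; no genuinely new computation is required.
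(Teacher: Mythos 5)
Your overall route matches the paper's proof: both reduce the claim to (i) roundness of the Pfister form $2^k\gen{1}$, which yields $\alpha \in D_F(2^k)$ if and only if $2^k\gen{1} \cong 2^k\gen{\alpha}$, and (ii) the fact that torsion in the (Grothendieck--)Witt ring is $2$-primary. The minimality bookkeeping is organized slightly differently, but the substance is the same, and you handle the ``moreover'' direction a bit more explicitly than the paper does.

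However, the step you dismiss as minor bookkeeping contains a real error: the claimed isomorphism $GW(F) \cong W(F) \oplus \ZZ\cdot H$ is false in general, because the exact sequence $0 \to \ZZ\cdot H \to GW(F) \to W(F) \to 0$ does not split. A concrete counterexample is $F = \FF_q$ with $q \equiv 3 \pmod 4$, where $W(\FF_q) \cong \ZZ/4\ZZ$ but $GW(\FF_q) \cong \ZZ \oplus \ZZ/2\ZZ$, which is not isomorphic to $\ZZ \oplus \ZZ/4\ZZ$. The conclusion you need --- that torsion in $GW(F)$ is $2$-primary --- is nonetheless true, and the correct justification is short: since $\ker\bigl(GW(F) \to W(F)\bigr) = \ZZ\cdot H$ is torsion-free, the torsion subgroup of $GW(F)$ injects into that of $W(F)$, which is $2$-primary by Pfister. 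Even more directly relevant for your element: the quotient map restricts to an isomorphism from the dimension-zero part $\hat{I}(F) = \ker(\dim) \subset GW(F)$ onto the fundamental ideal $I(F) \subset W(F)$, because $\ZZ\cdot H$ meets $\hat{I}(F)$ trivially ($\dim H = 2 \neq 0$); since $\gen{1}-\gen{\alpha}$ lies in $\hat{I}(F)$, its additive order in $GW(F)$ equals its additive order in $W(F)$, so all the classical Witt-ring facts apply verbatim.
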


\begin{proof}
It is well known that the torsion of $GW$ is 2-primary. Therefore, the additive order of $\gen{1}-\gen{\alpha}$ is a power of 2 or infinite. Clearly the order of $\gen{1}-\gen{\alpha}$ is $1$ precisely when $\tau_F(\alpha)=1$. Otherwise, suppose $o(\gen{1}-\gen{\alpha}) = 2^n$ for some $n\geq1$. That is, $2^n$ is the least power of two such that $2^n\gen{\alpha} = 2^n\gen{1}$, so $\alpha\in D(2^n\gen{1})$. Further, if $\alpha \in D(2^m\gen{1})$ for $m < n$, then $\alpha$ is a similarity factor of $2^m\gen{1}$ as Pfister forms are round forms (see \cite[Appendix to \S X.1]{lam:2005}). But then
$2^m\gen{\alpha} = \gen{\alpha}\otimes 2^m\gen{1} = 2^m\gen{1}$, contradicting the minimality of $n$. 
\end{proof}

\begin{theorem}\label{thm:c2}
If $G = C_2$, then
$\TI{K}{F}=\gent{\tau_F(\Delta)(t_{2,2}-2)}$ where $\Delta$ is the discriminant of $K/F$.
\end{theorem}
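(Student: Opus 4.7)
The plan is to compute $\TI{K}{F}$ at each level of the $C_2$-subgroup lattice, then verify that the Tambara ideal generated by $\tau_F(\Delta)(t_{2,2}-2)$ recovers it.

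First, \cref{rmk: TI G/e} gives $\TI{K}{F}(C_2/C_1)=0$. At the top level, $\burn(C_2/C_2)=A(C_2)$ is the free $\ZZ$-module on $\{1,t_{2,2}\}$ with $t_{2,2}^2=2t_{2,2}$, and $\card_2(a+bt_{2,2})=a+2b$, so $\ker(\card_2)=\ZZ(t_{2,2}-2)$. Since $\TI{K}{F}\subseteq\ker(\card)$, the top-level ideal has the form $\TI{K}{F}(C_2/C_2)=n\ZZ(t_{2,2}-2)$, where $n$ is the additive order of $\Dr(t_{2,2}-2)$ in $GW(F)$ (with $n=0$ if the order is infinite, matching the $\tau_F=0$ convention).

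Next, I would identify $n$ with $\tau_F(\Delta)$. For $K=F(\sqrt{\Delta})$, the basis $\{1,\sqrt{\Delta}\}$ yields the trace form $\tr{F}{K}\gen{1}=\gen{2,2\Delta}$. The key reduction uses the identity $\gen{1,1}=\gen{2,2}$ in $GW(F)$, which holds because $2=1^2+1^2\in D_F(2)$ and the Pfister form $\gen{1,1}$ is round. This gives
\[\Dr(t_{2,2}-2)=\gen{2,2\Delta}-\gen{1,1}=\gen{2,2\Delta}-\gen{2,2}=\gen{2\Delta}-\gen{2}=\gen{2}\bigl(\gen{\Delta}-\gen{1}\bigr).\]
Since $\gen{2}$ is a unit in $GW(F)$, multiplication by $\gen{2}$ is an additive automorphism; hence $\Dr(t_{2,2}-2)$ and $\gen{\Delta}-\gen{1}$ have the same additive order, namely $\tau_F(\Delta)$ by the preceding proposition. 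Thus $\TI{K}{F}(C_2/C_2)=\tau_F(\Delta)\ZZ(t_{2,2}-2)$.

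It remains to verify that $\gent{\tau_F(\Delta)(t_{2,2}-2)}$ matches $\TI{K}{F}$ at both levels. Using $t_{2,2}^2=2t_{2,2}$, one computes $(a+bt_{2,2})(t_{2,2}-2)=a(t_{2,2}-2)$, so the ring-theoretic ideal $(\tau_F(\Delta)(t_{2,2}-2))$ in $A(C_2)$ is exactly $\tau_F(\Delta)\ZZ(t_{2,2}-2)$. Restriction sends $t_{2,2}\mapsto 2$, so $\res{1}{2}(\tau_F(\Delta)(t_{2,2}-2))=0$; conjugation is trivial in the abelian case; and since the $C_2/C_1$ slice of the generated ideal is $0$, transfers and norms out of it contribute nothing further at the top level (the norm of $0$ being $0$).

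The central obstacle is the simplification $\Dr(t_{2,2}-2)=\gen{2}\bigl(\gen{\Delta}-\gen{1}\bigr)$; with this in hand the additive-order computation delegates directly to the preceding proposition, whereas without it one would have to wrestle with $\gen{2,2\Delta}-2\gen{1}$ via Pfister-form machinery applied to the larger form.
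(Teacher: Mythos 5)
Your proof is correct and follows essentially the same route as the paper: reduce to the $C_2/C_2$ level via \cref{rmk: TI G/e}, use the cardinality condition to reduce to multiples of $t_{2,2}-2$, compute $\Dr(t_{2,2}-2)=\gen{2}(\gen{\Delta}-\gen{1})$ via $\gen{1,1}=\gen{2,2}$, and invoke the preceding proposition on the additive order of $\gen{1}-\gen{\alpha}$. You are somewhat more explicit than the paper on two points the paper leaves implicit, namely the round-form justification of $\gen{1,1}=\gen{2,2}$ and the check that the Tambara ideal generated by $\tau_F(\Delta)(t_{2,2}-2)$ contributes nothing new from transfers and norms out of the trivial level.
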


\begin{proof}
Since $\TI{K}{F}(C_2/e)=(0)$ (see \cref{rmk: TI G/e}), it only remains to investigate $\TI{K}{F}(C_2/C_2)$. For $X = mt_2 + n \in \TI{K}{F}(C_2/C_2)$, we must have $n = -2m$ by requirements on the dimension of $\Dr(X)$. It is well known that $\tr{F}{K}(\gen{1}) = \gen{2, 2\Delta}$ for quadratic extensions (see \cite[Lemma VII.6.17]{lam:2005}), so applying the Dress map yields $$\Dr(X) = m(\gen{2, 2\Delta} - \gen{1, 1}) = m\gen{2}(\gen{\Delta} -\gen{1}) = 0.$$ 
Since $\gen{2}$ is a unit, the order of $\Dr(X)$ is the same as that of $\gen{\Delta} -\gen{1}$ and hence $\tau_F(\Delta) \mid m$. Therefore every element of $\TI{K}{F}(C_2/C_2)$ is an integer multiple of $\tau_F(\Delta)(t_2-2)$. 
\end{proof}

Before examining the trace ideal for a general cyclic 2-extension, it is useful to calculate the case for a $C_4$ extension. Recall that a quadratic extension $F\subseteq E:=F(\sqrt{\Delta})$ embeds into a $C_4$ extension $K/F$ if and only if $\Delta=a^2+b^2$ for some $a,b\in F^{\times}$.
Moreover, we can write $K = F(\sqrt{\delta})$ where $\delta = x(\Delta - a\sqrt{\Delta})$ for some $x \in F^\times$ (cf. \cite[\S VII.6 and \S VIII.5]{lam:2005}).

\begin{theorem}\label{thm:c4}
Suppose $G = C_4$ and let $E = K^{C_2}$. Let $\Delta, a, b, x$ and $ \delta$ be given as above. If $K/F$ embeds into a cyclic extension of degree $8$, then $$\TI{K}{F} = \gent{t_{4, 4} - t_{4, 2} - 2}.$$ 
Otherwise, $$\TI{K}{F} = \gent{2t_{4, 2} - 4, \pi_F(x)(t_{4, 4}-4), \tau_E(\delta)(t_{2, 2}-2)},$$
where \[
\pi_F(x)=\left\{\begin{array}{cc}
     0 &  \tau_F(x)=0;\\
     2 & \tau_F(x)=1,2; \\
     \frac{\tau_F(x)}{2} & \tau_F(x)\geq 4.
\end{array}\right.
\]
\end{theorem}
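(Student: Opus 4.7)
The plan is to analyze the trace ideal level-by-level and then verify the Tambara-ideal generators. By \cref{rmk: TI G/e}, $\TI{K}{F}(C_4/e) = 0$. Identifying $\burn_{C_4}(C_4/C_2)$ with $A(C_2)$, the Dress map at this level coincides with the Dress map of the quadratic extension $K/E$, so \cref{thm:c2} yields $\TI{K}{F}(C_4/C_2) = \tau_E(\delta)(t_{2,2}-2)\ZZ$. At the top level, the dimension constraint forces every $X \in \TI{K}{F}(C_4/C_4)$ to take the form $a(t_{4,4}-4) + b(t_{4,2}-2)$ with $a,b \in \ZZ$, reducing the question to which such $X$ are killed by $\Dr$.

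The central computation is $\tr{F}{K}\gen{1} = \gen{1,\Delta,x\Delta,x\Delta}$ in $GW(F)$, which I would obtain by iterating $\tr{F}{K}\gen{1} = \tr{F}{E}\gen{2,2\delta}$, computing the Gram matrix of $\tr{F}{E}\gen{\delta}$ in the basis $\{1,\sqrt{\Delta}\}$ of $E$, and invoking the isometry $\gen{\Delta,\Delta} \cong \gen{1,1}$ (valid since $\Delta = a^2 + b^2$). Consequently,
\[
\Dr(t_{4,4}-4) = (\gen{\Delta}-\gen{1}) + 2(\gen{x}-\gen{1}), \qquad \Dr(t_{4,2}-2) = \gen{2,2\Delta} - 2\gen{1}.
\]
From these, $2t_{4,2}-4 \in \TI{K}{F}$ in every case (additionally using $\gen{2,2} \cong \gen{1,1}$), and $\pi_F(x)(t_{4,4}-4) \in \TI{K}{F}$ via a short case-check against the definition of $\pi_F(x)$. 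The element $\Dr(t_{4,4}-t_{4,2}-2)$ is a formal difference of two $4$-dimensional forms of equal discriminant $\Delta$, so its vanishing reduces to equality of their Hasse invariants---a Hilbert-symbol identity that I would identify with the classical criterion for a $C_4$-extension to embed in a $C_8$-extension.

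In the $C_8$-embedding case, $t_{4,4}-t_{4,2}-2 \in \TI{K}{F}$. Applying \cref{lem:2gens} with $K = M = C_4$ inserts $2t_{4,2}-4$ into $\gent{t_{4,4}-t_{4,2}-2}$ at level $C_4/C_4$ and $2t_{2,2}-2$ at level $C_4/C_2$. Decomposing an arbitrary $X = a(t_{4,4}-4) + b(t_{4,2}-2) \in \TI{K}{F}(C_4/C_4)$ as $a(t_{4,4}-t_{4,2}-2) + (a+b)(t_{4,2}-2)$ and using $2t_{4,2}-4 \in \gent{t_{4,4}-t_{4,2}-2}$ forces $a+b$ to be even, putting $X$ in this principal Tambara ideal; the level $C_4/C_2$ computation is similar, using that the embedding hypothesis forces $\tau_E(\delta) \le 2$.

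In the non-embedding case, $\Dr(t_{4,4}-t_{4,2}-2) \neq 0$. Each proposed generator is already in $\TI{K}{F}$; to see they suffice, first use $2t_{4,2}-4 \in \TI{K}{F}$ to reduce $b$ modulo $2$, then combine the non-vanishing of $\Dr(t_{4,4}-t_{4,2}-2)$ with the fact that $\Dr(t_{4,4}-4)$ has order exactly $\pi_F(x)$ in $GW(F)$ (a consequence of the formula above together with $2(\gen{\Delta}-\gen{1}) = 0$) to force $\pi_F(x) \mid a$. Level $C_4/C_2$ is then recovered from $\tau_E(\delta)(t_{2,2}-2)$ directly. The main obstacle is the identification of the vanishing of $\Dr(t_{4,4}-t_{4,2}-2)$ with the classical $C_8$-embedding criterion, which requires a careful Hilbert-symbol calculation; the rest is bookkeeping with the Tambara structure maps and standard quadratic-form identities such as $\gen{a,a} \cong \gen{b,b}$ when $a/b$ is a sum of two squares.
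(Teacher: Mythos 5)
Your outline matches the paper's proof very closely: both peel off the $C_4/e$ and $C_4/C_2$ levels via \cref{rmk: TI G/e} and \cref{thm:c2}, both compute $\Dr$ at the top level from the trace form of $K/F$, both separate the analysis of $m(t_{4,4}-4) + n(t_{4,2}-2)$ by parity and order, and both invoke the same classical criterion (Drees--Epkenhans--Kr\"uskemper) identifying the vanishing of $\Dr(t_{4,4}-t_{4,2}-2)$ with $C_8$-embeddability before closing off the embedding case with \cref{lem:2gens}. Your form $\gen{1,\Delta,x\Delta,x\Delta}$ is isometric to the paper's $\gen{1,\Delta,x,x}$ since $\gen{\Delta,\Delta}\cong\gen{1,1}$, so the discrepancy is cosmetic. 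Two small cautions. First, the element you get from \cref{lem:2gens} at level $C_4/C_2$ is $2t_{2,2}-4$, not $2t_{2,2}-2$ (computing $\res{2}{4}(t_4-t_2-2)$ directly gives $2t_2-4$); this is only a transcription slip. Second, your handling of the both-odd case is thinner than the paper's. When $a$ and $b$ are both odd, you write $\Dr(X)=a\Dr(t_{4,4}-4)+\Dr(t_{4,2}-2)=(a+1)\Dr(t_{4,4}-4)-\Dr(t_{4,4}-t_{4,2}-2)$, and asserting ``$\pi_F(x)\mid a$'' (i.e., that no odd $a$ works) requires ruling out a cancellation between $\Dr(t_{4,4}-t_{4,2}-2)$ and even multiples of $\Dr(t_{4,4}-4)$; an order-of-element argument alone does not do this without an added invariant (e.g., a discriminant/filtration comparison). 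The paper sidesteps this by a B\'ezout reduction showing $X\in\TI{K}{F}\iff t_{4,4}-t_{4,2}-2\in\TI{K}{F}$ whenever $\pi_F(x)\neq 0$, together with a separate ``multiply by $t_2$'' trick that rules out $\pi_F(x)=0$; you may want to adopt that reduction rather than rely on the order argument in the odd--odd subcase.
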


\begin{proof}
Since $E/F$ is a quadratic Galois extension, \cref{thm:c2} tells us that \[
\TI{K}{F}(C_4/e) = (0) \hspace{.1in}\text{ and }\hspace{.1in} \TI{K}{F}(C_4/C_2) = (\tau_E(\delta)(t_2 - 2)).\]

Now let $X \in \TI{K}{F}(C_4/C_4)$. Since $\card(X) = 0$, $X$ is of the form $mt_4+nt_2-(4m+2n)$ for some $m, n \in \ZZ$. The Dress map takes this element to $$\Dr(X) = m\gen{1, \Delta, x, x} + n\gen{2, 2\Delta} - (4m+2n)\gen{1},$$ since $\Dr_{C_4}(1) = \gen{1}$, $\Dr_{C_4}(t_2) = \gen{2, 2\Delta}$, and $\Dr_{C_4}(t_4) = \gen{1, \Delta, x, x}$ (by \cite[Corollary VII.6.19]{lam:2005}).

When $m = 0$, our work on quadratic extensions implies $\Dr(X) = 0$ if and only if $\tau_F(\Delta)\mid n$. Hence any such $X$ is a multiple of $\tau_F(\Delta)(t_2-2)=2t_2-4$. 

If instead $n = 0$, then $\Dr(X) = m(\gen{\Delta, x, x} - \gen{1, 1, 1})$. If $\tau_F(x) \leq 2$, $\Dr(X)$ becomes $m(\gen{\Delta}-\gen{1})$ and so $\Dr(X) = 0$ if and only if $2\mid m$. Otherwise, if $\tau_F(x) \geq 4$, then $\Dr(X) = 2m(\gen{x}-\gen{1})$ and so $\tau_F(x)\mid 2m$ is equivalent to $\frac{\tau_F(X)}{2} \mid m$. In either case, $\pi_F(x)\mid m$ and hence all such elements are integer multiples of $\pi_F(x)(t_4-4)$.

Now suppose both $m,n$ are non-zero. Note that $m$ and $n$ must have the same parity since the determinant of $\Dr(X)$ must be $1$ for $X\in \TI{K}{F}$. Any element of the ideal with even $m$ and $n$ can be obtained from the two generators already given, so it just remains to investigate the case where both $m$ and $n$ are odd. We claim that for such an $X$, $$X \in \TI{K}{F}(C_4/C_4) \iff t_4-t_2-2 \in \TI{K}{F}(C_4/C_4).$$
We first show that $\pi_F(x) = 0$ implies that there can be no such $X\in \TI{K}{F}$. Suppose otherwise. Then by the ideal properties of the trace ideal, we get
\begin{align*}
    t_2X + 2m(2t_2-4) = 2mt_4 - 8m \in \TI{K}{F}.
\end{align*}
But then $0 \mid 2m$ by the previous paragraph, which is a clear contradiction. 
Now suppose $\pi_F(x) \neq 0$. Then since $\gcd(\pi_F(x), m) = 1$, there are some $a, b\in \ZZ$ such that $am + b\pi_F(x) = 1$. Thus $X \in \TI{K}{F}$ if and only if
\begin{align*}
    aX + b\pi_F(x)(t_4-4) - (n+1)(t_2-2) = t_4-t_2-2 \in \TI{K}{F},
\end{align*}
which proves the claim. 

Further, by \cite[Section 4, Proposition 9]{drees/epkenhans/kruskemper:1997}, $K/F$ embeds into a $C_8$ extension if and only if $\Dr(t_4-t_2-2)=\gen{1, \Delta, x, x} - \gen{2, 2\Delta, 1, 1}$ is zero. That is, if there is no such embedding, the trace ideal is generated by $\pi_F(x)(t_4-4),~ 2t_2-4\in \burn(C_4/C_4)$ and $\tau_E(\delta)(t_2 - 2)\in \burn(C_4/C_2)$ as claimed.

Suppose $K/F$ embeds into a $C_8$ extension. Then $\tau_E(\delta)=2$ and $t_4-t_2-2\in \TI{K}{F}(C_4/C_4)$ (so $\pi_F(x)\neq 0$). \cref{lem:2gens} implies that this element generates the entire ideal. 
\end{proof}

\begin{lemma}\label{lem:2-group upper bound}
Suppose $G = C_{2^n}$ for $n \geq 2$. Then for all $1\leq m\leq n$,
$$\TI{K}{F}(C_{2^n}/C_{2^m}) \subseteq (t_{2^i} + t_2-2^i-2 \mid 1 \leq i \leq m),$$
and hence
$$\TI{K}{F} \subseteq \gent{t_{4, 4} -t_{4, 2}-2}.$$
\end{lemma}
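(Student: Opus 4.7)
The plan is to handle the two assertions separately. For the level-wise inclusion, fix $1 \leq m \leq n$, set $L = K^{C_{2^m}}$, and write $\Delta \in L^\times$ for the discriminant of the unique quadratic subextension $K^{C_{2^{m-1}}}/L$; since this is a proper quadratic extension, $\Delta$ represents a nontrivial class in $L^\times/L^{\XX}$. A general element of $\burn(C_{2^n}/C_{2^m})$ has the form $X = \sum_{i=0}^m a_i t_{2^i}$, and membership in $\TI{K}{F}(C_{2^n}/C_{2^m})$ forces both the dimension and the discriminant of $\Dr(X)$ to vanish. The dimension condition yields $\sum_{i=0}^m 2^i a_i = 0$, while the discriminant condition uses that every trace form $\Dr(t_{2^i}) = \tr{K^{C_{2^{m-i}}}}{L}\gen{1}$ for $i \geq 1$ has discriminant $\Delta$ in $L^\times/L^{\XX}$; thus $\Delta^{\sum_{i \geq 1} a_i}$ is trivial, which forces $\sum_{i=1}^{m} a_i$ to be even.

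The discriminant claim rests on the standard equality $\operatorname{disc}(\tr{E/F}\gen{1}) = \operatorname{disc}(E/F)$ in $F^\times/F^{\XX}$, combined with the observation that for a cyclic $2$-extension of degree $\geq 2$ the discriminant modulo squares coincides with that of its unique quadratic subextension. The degree-$2$ and degree-$4$ cases are immediate from \cref{thm:c2} and the explicit formula $\Dr(t_4) = \gen{1, \Delta, x, x}$ (with determinant $\Delta$) in \cref{thm:c4}; the higher cases follow by induction using the tower formula for discriminants, or equivalently via the conductor--discriminant formula, since $C_{2^i}$ has a unique quadratic character and all higher-order characters contribute squares.

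Given these two congruences, a direct calculation---substituting $a_0 = -\sum_{i \geq 1} 2^i a_i$ and telescoping---shows that
\[
X - \sum_{i=2}^m a_i\bigl(t_{2^i} + t_2 - 2^i - 2\bigr) = \Bigl(a_1 - \sum_{i=2}^m a_i\Bigr)(t_2 - 2).
\]
The coefficient $a_1 - \sum_{i \geq 2} a_i$ has the same parity as $\sum_{i \geq 1} a_i$ and is therefore even, so the right-hand side is an integer multiple of $2(t_2 - 2) = t_2 + t_2 - 2 - 2$, the generator for $i = 1$. This completes the level-wise inclusion. The Tambara inclusion $\TI{K}{F} \subseteq \gent{t_{4,4} - t_{4,2} - 2}$ then follows by combining the level-wise result with \cref{lem:2gens} applied to $\sr{I} = \gent{t_{4,4} - t_{4,2} - 2}$: the lemma places every generator $t_{2^i} + t_2 - 2^i - 2$ for $1 \leq i \leq m$ into $\sr{I}(C_{2^n}/C_{2^m})$.

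The main obstacle is justifying the discriminant stabilization uniformly across all $i \geq 1$. The cases $i = 1, 2$ are immediate from earlier theorems, but establishing it for $i \geq 3$ requires either a careful inductive tower computation (keeping track of when $N_{L'/L}$-factors become squares) or a clean appeal to the conductor--discriminant formula, either of which demands some care to ensure that no hidden square factors sneak in.
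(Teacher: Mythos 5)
Your proof is correct and follows essentially the same route as the paper's: use $\card(X)=0$ for the dimension constraint, discriminant stabilization across the tower for the parity constraint, and then express any such $X$ in terms of the stated generators (you supply the explicit telescoping identity that the paper glosses as ``one can easily check''), finishing with \cref{lem:2gens} for the Tambara inclusion. The ``main obstacle'' you flag is not actually an obstacle: for any Galois extension $E/L$ with group $G$, the subfield $L(\sqrt{\operatorname{disc}(E/L)})$ is the fixed field of $G\cap A_{[E:L]}$ under the regular permutation representation, and for $G=C_{2^r}$ a generator acts as a $2^r$-cycle (an odd permutation), so $G\cap A_{2^r}$ is precisely the unique index-$2$ subgroup $C_{2^{r-1}}$; hence $\operatorname{disc}(E/L)$ agrees mod squares with the discriminant of the unique quadratic subextension, with no tower induction or conductor--discriminant bookkeeping required.
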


\begin{proof}
Suppose $X = \sum_{i = 0}^{n-m}a_it_{2^i}\in \TI{K}{F}(C_{2^n}/C_{2^m})$ for $a_i \in \ZZ$. Then $\sum_{i = 0}^{n-m}2^ia_i = 0$ since $\card(X) = 0$. Now, from basic Galois and quadratic form theory we know that the determinant $d(\Dr(t_i))$ has the same square class as the discriminant of $K^{C_{2^m}} \subseteq K^{C_{2^{m-i}}}$ and therefore $d(\Dr(t_i)) = d(\Dr(t_j))$ for all $1\leq i, j \leq n-m$. Thus the condition that $d(\Dr(X)) = 1F^\boxtimes$ implies that $\sum_{i = 1}^{n-m}a_i \equiv_2 0$. One can easily check that any $X$ satisfying these two conditions can be written as a sum of the generators listed in the theorem. The second part of the theorem follows directly from \cref{lem:2gens}. 
\end{proof}

Now let $G = C_{2^n}$ for some $n\geq 3$, $E = K^{C_2}$, and $L = K^{C_4}$. Since $K/L$ is a $C_4$ extension, we can write $E = L(\sqrt{\Delta})$ for some $\Delta = a^2+b^2\in L$ and $K = E(\sqrt{\delta})$ where $\delta = x(\Delta+a\sqrt{\Delta})$. 

\begin{theorem}\label{thm:twopowers}
Adopting the notation above, we have the following presentation of the trace ideal:
\begin{enumerate}
    \item If K/L embeds into a $C_8$ extension,
    $$\TI{K}{F} = \gent{t_{4, 4} - t_{4, 2} - 2}.$$
    \item If $\tau_E(\delta) = 0$,
    $$\TI{K}{F} = \gent{t_{8, 4} - t_{8, 2} - 2}.$$
    \item Suppose neither of the above hold. Let $m$ be the minimal index (with $3\leq m\leq n$) such that $\tr{L}{K^{C_{2^m}}}(\gen{\Delta, x, x} - \gen{2, 2\Delta, 1}) = 0$. Then $\TI{K}{F}$ is the Tambara ideal generated by\begin{enumerate}
        \item $t_{2^{m}, 2^{m}} - t_{2^{m}, 2^{m-1}} - 2t_{2^{m}, 2^{m-2}},$
        \item $t_{8, 4} - t_{8, 2} - 2,$
        \item $a_i(t_{{2^{i}}, 2^{i}} - 2^{i})$
    \end{enumerate} for some $a_i \in \NN$ where $i$ ranges from 0 to $n-1$. If there is no such $m$, then take $(a)$ to be $0$. Furthermore, each $a_i = o(\tr{K}{K^{C_i}}(\gen{1}) - 2^i\gen{1})$ is a power of two ($a_i\neq 1$) with $a_{i+1} \mid a_i \mid 2a_{i+1}$.
\end{enumerate}
\end{theorem}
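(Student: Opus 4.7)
The plan is to proceed by case analysis, with \cref{lem:2-group upper bound} providing the uniform upper bound $\TI{K}{F} \subseteq \gent{t_{4,4} - t_{4,2} - 2}$. In each case, we will identify which Tambara generators actually lie in the trace ideal via Dress-map computations and then match the upper bound level-by-level using \cref{lem:2gens} together with the explicit restriction, transfer, and norm formulas from \cref{sec:A(Cn)}.

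For case (1), since $K/L$ embeds into a $C_8$ extension by hypothesis, \cref{thm:c4} applied to the $C_4$ extension $K/L$ immediately yields $t_{4,4} - t_{4,2} - 2 \in \TI{K}{L}(C_4/C_4) = \TI{K}{F}(C_{2^n}/C_4)$, matching the Tambara upper bound. For case (2), we observe that the $C_4$ subextension $E/K^{C_8}$ always embeds into the $C_8$ extension $K/K^{C_8}$, so \cref{thm:c4} yields $t_{8,4} - t_{8,2} - 2 \in \TI{K}{F}(C_{2^n}/C_8)$. The hypothesis $\tau_E(\delta) = 0$ together with \cref{thm:c2} applied to $K/E$ forces $\TI{K}{F}(C_{2^n}/C_2) = (0)$, which by restriction forces $\pi_L(x) = 0$ at the $C_4$ level. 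Matching the upper bound at levels $C_{2^m}$ for $m \geq 3$ will then follow from \cref{lem:2gens} starting from $C_8$, supplemented by a direct transfer computation to produce the missing top-level generator.

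Case (3) is the main technical case, where three families of generators appear. Generator (a), $t_{2^m, 2^m} - t_{2^m, 2^{m-1}} - 2 t_{2^m, 2^{m-2}} = \tr{C_4}{C_{2^m}}(t_4 - t_2 - 2)$, lies in the trace ideal at level $C_{2^m}$ precisely when $\tr{L}{K^{C_{2^m}}}(\gen{\Delta, x, x} - \gen{2, 2\Delta, 1}) = 0$ in $GW(K^{C_{2^m}})$, by the $C_8$-embedding criterion of \cite[\S 4, Proposition 9]{drees/epkenhans/kruskemper:1997}; this pins down the minimal index $m$. Generator (b) arises as in case (2). Generators (c) $a_i(t_{2^i, 2^i} - 2^i)$ encode the additive order of the defect form $\tr{K}{K^{C_{2^i}}}\gen{1} - 2^i \gen{1}$ in $GW(K^{C_{2^i}})$; their containment is by construction, and the divisibility $a_{i+1} \mid a_i \mid 2 a_{i+1}$ will follow from transitivity of the field trace combined with the fact that $\tr{K^{C_{2^i}}}{K^{C_{2^{i+1}}}}\gen{1} - 2\gen{1}$ is $2$-power torsion in $GW(K^{C_{2^{i+1}}})$ (as $GW$-torsion is $2$-primary).

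The main obstacle will be verifying that the three families in case (3) exhaust the trace ideal. The upper bound provides a finite set of classical generators at each level, but showing that every element of $\TI{K}{F}(C_{2^n}/C_{2^k})$ is a Tambara combination of the claimed generators requires detailed bookkeeping of Dress images and exploiting the multiplicative interaction of discriminant data across the extensions $K^{C_{2^j}}/K^{C_{2^{j+1}}}$. The specific coefficient $2$ on $t_{2^m, 2^{m-2}}$ in generator (a) encodes the $C_8$-embedding obstruction, and integrating this Galois-theoretic input into a level-wise Tambara completeness argument is the crux of the proof.
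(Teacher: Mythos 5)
Your cases (1) and (3) track the paper's proof closely, and your identification of the generators and their Dress-theoretic meaning is correct. But case (2) contains a directional error that would make your argument prove the wrong thing, so let me flag it concretely.

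In case (2), $\TI{K}{F}$ is \emph{strictly smaller} than the uniform upper bound $\gent{t_{4,4}-t_{4,2}-2}$ from \cref{lem:2-group upper bound}. At a level $C_{2^i}$ (for $i \geq 3$), that upper bound gives $(t_{2^j}+t_2-2^j-2 : 1 \leq j \leq i)$, while \cref{lem:2gens} applied starting from $t_{8,4}-t_{8,2}-2$ at $C_8$ only produces the generators with $1 \leq j \leq i-1$. The discrepancy is real: the top generator $t_{2^i}+t_2-2^i-2$ is genuinely absent from $\TI{K}{F}(C_{2^n}/C_{2^i})$. You propose a ``direct transfer computation to produce the missing top-level generator,'' i.e., you want to close the gap upward to meet \cref{lem:2-group upper bound}. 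This is the wrong direction, and if it succeeded it would give $\TI{K}{F} = \gent{t_{4,4}-t_{4,2}-2}$ --- the case (1) answer, not the case (2) answer $\gent{t_{8,4}-t_{8,2}-2}$. The paper instead shows the top generator is excluded: writing $X = \sum_{j=0}^i m_j t_{2^j} \in \TI{K}{F}(C_{2^n}/C_{2^i})$, a \emph{restriction} $\res{C_2}{C_{2^i}}(X)$ lands in $\TI{K}{F}(C_{2^n}/C_2)=(0)$ (forced, as you correctly observed, by $\tau_E(\delta)=0$), which forces $m_i=0$. This is what makes the trace ideal strictly smaller than the upper bound at every level, and it is precisely the content distinguishing case (2) from case (1).

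On case (3): you correctly identify generator (a) as $\tr{C_4}{C_{2^m}}(t_4-t_2-2)$ and note that it lies in the trace ideal iff its Dress image vanishes (this is tautological, not really an invocation of the $C_8$-embedding criterion of Drees--Epkenhans--Kr\"uskemper, which concerns $K/L$ itself). You also correctly identify generators (b) and (c) and sketch the divisibility. But the crux --- showing the three families exhaust $\TI{K}{F}$ at each level --- is the part the paper's induction actually does, by splitting on the parity of the top coefficient $m_i$ of $X = \sum_j m_j t_{2^j}$: for $m_i$ even, the determinant condition reduces $X$ to $m_i(t_{2^i}-2^i)$ and the order $a_i$ does the rest; for $m_i$ odd, a gcd argument reduces $X$ to the element $Y=\tr{C_4}{C_{2^i}}(t_4-t_2-2)$, which is in the trace ideal precisely when $i \geq m$. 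Your proposal acknowledges this as the crux but does not carry it out, so as written it does not yet prove (3).
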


\begin{remark}
It is worth noting that, for (3), we are unsure as to whether such a minimal index $m$ ever exists. That is, it could be that (a) should always be taken to be $0$.
\end{remark}

\begin{proof}
(1) By considering the sub-extension $K/L$, \cref{thm:c4} tells us that $t_{k, 4} - t_{k, 2} - 2 \in \TI{K}{F}$. Combining this observation with \cref{lem:2-group upper bound} yields the desired result. 

\vspace{.2cm}

\noindent (2) Consider the sub-extension $K^{C_8}/E$. Then $t_{4} - t_{2} - 2 \in \TI{K}{F}(C_{2^n}/C_8)$, and since the restriction of this element to $C_4$ is $2t_2-4$, \cref{thm:c4} tells us that $\TI{K}{F}(C_{2^n}/C_{2^i})$ is as desired for $i = 0,1, 2$. 

Now let $i \geq 3$ and consider $$X = \sum_{j = 0}^im_jt_{2^j} \in \TI{K}{F}(C_{2^n}/C_{2^i}).$$ Then $\res{2}{{2^i}}(X) = m_i(t_2 - 2)$, but the fact that $\TI{K}{F}(C_{2^n}/C_{2^i}) = (0)$ implies $m_i = 0$. So we have $X = \sum_{j = 0}^{i-1}m_jt_{2^j}$. The conditions on the determinant and dimension of $\Dr(X)$ imply that $\sum_{j = 0}^{i-1}m_j = 0$ and $\sum_{j = 1}^{i-1}m_j \equiv_2 0$. Thus $$\TI{K}{F}(C_{2^n}/C_{2^i}) \subseteq (t_2^j + t_2 - (2^j+1))$$ where $j$ ranges from $1$ to $i-1$. Since $t_{l, 4} - t_{l, 2} - 2 \in \TI{K}{F}$, \cref{lem:2gens} implies that each of these generators is in $\TI{K}{F}$. Therefore $\TI{K}{F} = \gent{t_{l, 4} - t_{l, 2} - 2}$ as desired.

\vspace{.2cm}

\noindent (3) Supposing that $\tau_E(\delta) \neq 0$, let $a_1 = \tau_E(\delta)$ and $a_2 = \pi_L(x)$. First suppose there is no $i$ for which $\tr{L}{K^{C_m}}(\gen{D,x,x}-\gen{2,2D,1})=0$. Then $\TI{K}{F}(C_{2^n}/C_{2^i})$ is clearly as claimed for $i = 0, 1, 2$. 

Now let $3\leq i\leq n$ and suppose $\TI{K}{F}(C_{2^{n}}/C_{2^{i-1}})$ is as claimed. By \cref{lem:2gens}, we have $t_{2^j} + t_2 - (2^j+2) \in \TI{K}{F}$ for all $1\leq j < i$.  Consider $X = \sum_{j=0}^im_jt_{2^j} \in \TI{K}{F}(C_{2^n}/C_{2^i})$. Supposing $m_{i}$ is even, the condition on the determinant of $\Dr(X)$ implies that $\sum_{j=0}^{i-1}m_j \equiv_2 0$. Therefore $X$ is in the trace ideal if and only if $m_i(t_{2^i} - 2^i) \in \TI{K}{F}$. Taking $$a_i = o(\tr{K}{K^{C_{2^i}}}\gen{1} - 2^i\gen{1})$$ shows that $X \in \TI{K}{F}$ if and only if $a_i \mid m_i$, which is to say that $X$ is in the ideal generated as claimed. Observing that $\res{{2^{i-1}}}{{2^i}}(a_i(t_{2^i} - 2^i)) = 2a_i(t_{2^{i-1}} - 2^i) \in \TI{K}{F}$ implies $a_{i-1} \mid 2a_i$. Furthermore, $\tr{{2^{i-1}}}{{2^i}}(a_{i-1}(t_{2^{i-1}} - 2^{i-1})) + 2^{i-1}(t_2-2) = a_{i-1}(t_{2^i} - 2^i) \in \TI{K}{F}$ implies that $a_i \mid a_{i-1}$. Since $t_F(\delta) = a_1 \neq 0$ by assumption, these relations imply that $a_i \neq 0$, and the condition on the determinant implies that $a_i \neq 1$. 

Now suppose that $m_i$ is odd. Since $\gcd(m_i, a_i) = 1$ there is some integer combination of $X$ and $a_i(t_{2^i} - 2^i)$ for which the coefficient of $t_{2^i}$ is $1$. Since we have shown $a_i(t_{2^i} - 2^i)\in \TI{K}{F}$ and moreover $t_{2^j} + t_2 - 2^j - 2 \in \TI{K}{F}$ for all $1 \leq j < i$, we see that $X \in \TI{K}{F}$ if and only if $Y := t_{2^i} - t_{2^{i-2}} - 2t_{2^{i-3}}$ is in the trace ideal as well. But $Y = \tr{C_4}{C_i}(t_4-t_2-2)$, and so $\Dr(Y) = \tr{L}{K^{C_i}}(\gen{\Delta, x, x} - \gen{2, 2\Delta, 1})$. Since we have supposed there is no $i$ for which $\Dr(Y)=0$, we have no such $X$ in the trace ideal. 

Otherwise, let $m$ be the minimal such index. Then the arguments above apply for all indices $i < m$, so we assume $i = m$. By the arguments above, it suffices to take $Y$ as an additional generator and hence $\TI{K}{F}(C_{2^n}/C_{2^m})$ is as claimed. Furthermore, an obvious adaptation of the argument from \cref{lem:2gens} shows that the level-wise ideals are as claimed for all $i > m$. \end{proof}


\subsection{General cyclic extensions} Combining the results from the previous two subsections allows us to consider an arbitrary cyclic group $C_N$. \cref{thm:odd deg Cn extn} determines the trace ideal when $N$ is odd, so we examine $N$ even.

\begin{theorem}\label{thm:TI for Cn}
Let $G = C_N$ where $N$ has prime decomposition $2^\mu p_1^{\sigma_1}\cdots p_s^{\sigma_s}$ for $\mu, \sigma_i\geq 1$. Then $\TI{K}{F}$ is generated by 
\begin{enumerate}
    \item $\sum_{i=1}^s (t_{p_i}-p_i) \in \burn(C_N/C_{p_1\cdots p_s})$ and
    \item $\sr{G}$, where $\sr{G}$ generates $\TI{K}{K^{C_{2^\mu}}}$ as in \cref{thm:twopowers}.
\end{enumerate}
\end{theorem}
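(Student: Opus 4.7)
Denote the two generators $\alpha := \sum_{i=1}^s(t_{p_i} - p_i)$ and $\sr{G}$, and set $\sr{I} := \gent{\alpha, \sr{G}}$. The plan is to show $\sr{I} = \TI{K}{F}$ by verifying both containments.

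For $\sr{I} \subseteq \TI{K}{F}$, I first check each generator separately. Each $t_{p_i} - p_i$ maps to zero under $\Dr$ because the trace form of an odd-degree extension of degree $p_i$ equals $p_i\langle 1\rangle$ (cf.\ \cite{lam:2005}), so $\alpha \in \TI{K}{F}$. For $\sr{G}$, I note that $\sr{G}$ lives at subgroup levels contained entirely in $C_{2^\mu}$, where the Dress map for $K/F$ literally coincides with that for $K/K^{C_{2^\mu}}$ (both are determined by the trace forms $\tr{K^H}{K^L}\langle 1\rangle$ for $H \leq C_{2^\mu}$). Hence $\sr{G} \in \TI{K}{K^{C_{2^\mu}}}$ transfers directly to $\sr{G} \in \TI{K}{F}$.

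For the reverse inclusion, I first apply \cref{lem:prime-gens}: restricting $\alpha$ from $C_{p_1\cdots p_s}$ down to $C_{p_i}$ isolates $t_{p_i} - p_i$, and the lemma then propagates $(t_{H,j} - j) \in \sr{I}(C_N/C_H)$ for every odd $j \mid |H|$. Now take an arbitrary $X = \sum_d c_d t_{H,d} \in \TI{K}{F}(C_N/C_H)$ with $H = C_{2^a} \times C_m$ (so $a \leq \mu$, $m \mid M$). The identity $t_{H, 2^b j} = t_{H, 2^b}\, t_{H, j}$ for $j$ odd coprime to $2^b$, combined with $t_{H, j} - j \in \sr{I}$, reduces $X$ modulo $\sr{I}(C_N/C_H)$ to
\[
   X' := \sum_{b \leq a} \biggl(\sum_{\substack{j\mid m \\ j\text{ odd}}} j\, c_{2^b j}\biggr) t_{H, 2^b},
\]
which is supported only on $2$-power $t$'s. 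Setting $Y := \sum_b c'_b\, t_{C_{2^a}, 2^b} \in A(C_{2^a})$ with the same coefficients, the four fields $K^H$, $K^{C_{2^a}}$, $K^{C_{2^{a-b}} \times C_m}$, $K^{C_{2^{a-b}}}$ form a Mackey square—linearly disjoint over $K^H$ since $K^{C_{2^a}}/K^H$ is odd-degree and $K^{C_{2^{a-b}} \times C_m}/K^H$ is a $2$-power extension—so the Mackey identity gives $\res_{K^H}^{K^{C_{2^a}}} \Dr_H(X') = \Dr_{C_{2^a}}(Y)$. From $\Dr_H(X') = 0$ I conclude $Y \in \TI{K}{K^{C_{2^\mu}}}(C_{2^\mu}/C_{2^a})$, so by \cref{thm:twopowers}, $Y$ lies in the $C_{2^\mu}$-Tambara ideal $\gent{\sr{G}}_{C_{2^\mu}}$.

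It remains to lift this $C_{2^\mu}$-membership of $Y$ to $X' \in \sr{I}(C_N/C_H)$. My plan is to verify directly that each explicit level-wise generator of $\gent{\sr{G}}_{C_{2^\mu}}$ produced in the proof of \cref{thm:twopowers}—elements such as $t_{2^i} + t_2 - (2^i + 2)$ and $a_i(t_{2^i} - 2^i)$—admits an analogous lift inside $\sr{I}(C_N/C_H)$, obtained by rerunning the norm and transfer manipulations of \cref{lem:2gens} within $\burn_{C_N}$ and using the odd-prime generators to absorb any cross-terms. I expect this lifting step to be the main obstacle: the natural ring inclusion $A(C_{2^a}) \hookrightarrow A(C_H)$ sending $t_{C_{2^a}, 2^b} \mapsto t_{H, 2^b}$ is not induced by any Tambara operation, and explicit calculation (e.g.\ $\N{C_2}{C_6}(t_2) = t_4 + 5\,t_{12}$) shows that norms in $\burn_{C_N}$ generate extraneous contributions indexed by odd divisors of $m$ that must be carefully cancelled using the odd part of $\sr{I}$, quite possibly via an induction on $a$ or on the number of odd prime factors of $|H|$.
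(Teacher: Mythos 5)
Your overall strategy matches the paper's: reduce the inclusion $\TI{K}{F}\subseteq\sr{I}$ to elements supported on $2$-power transitive sets, then compare against the $2$-primary generators. However, your proof stops short of a proof at exactly the point you flag as the ``main obstacle,'' and this gap is real but is in fact already closed by machinery in the paper that you cite but do not fully use.

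Two smaller remarks first. Your Mackey-square / linear-disjointness argument for $\res\Dr_H(X')=\Dr_{C_{2^a}}(Y)$ is unnecessary: since $\Dr$ is a morphism of Tambara functors it commutes with restriction, and the Burnside restriction formula (\cref{eqn:mult} together with Equation (3.3)) gives $\res^{H}_{C_{2^a}}(t_{H,2^b})=t_{C_{2^a},2^b}$ directly because $\gcd(2^b,m)=1$; hence $Y=\res^{H}_{C_{2^a}}(X')$ and $Y\in\TI{K}{F}(C_N/C_{2^a})$ follows from closure of ideals under restriction. Also, the easy inclusion $\sr{I}\subseteq\TI{K}{F}$ you spell out is correct but is essentially immediate and the paper leaves it implicit.

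The substantive issue is the lifting step: having reduced to $Y=\sum_b n_b t_{C_{2^a},2^b}\in\gent{\sr{G}}_{C_{2^\mu}}(C_{2^\mu}/C_{2^a})$, you need $X'=\sum_b n_b t_{H,2^b}\in\sr{I}(C_N/C_H)$. You correctly sense that the ring-level inclusion $A(C_{2^a})\hookrightarrow A(C_H)$ is not a Tambara operation, and you worry that norms in $\burn_{C_N}$ throw off extraneous terms indexed by odd divisors. But this is exactly what \cref{lem:prime-gens} and \cref{lem:2gens} are \emph{already formulated to handle}: both lemmas are stated for arbitrary $C_K\leq C_M\leq C_N$ (so $M/K$ may have odd prime factors), and in their proofs the case $q\neq 2$ (resp.\ $q\neq p$) is treated explicitly, with the odd-prime cross-terms cancelled by transfers and by multiples of $2t_2-4$ (resp.\ of $t_p-p$). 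So once $t_4-t_2-2$ (or the appropriate variant of $\sr{G}$) lies in $\sr{I}$ at the $C_4$ or $C_{2^k}$ level, \cref{lem:2gens} directly places $t_{2^i}+t_2-(2^i+2)$ in $\sr{I}(C_N/C_M)$ for all intermediate $M$, odd factors included. The paper's proof applies these two lemmas at precisely this juncture and only needs an ad hoc adaptation for the single generator $t_{2^m,2^m}-t_{2^m,2^{m-1}}-2t_{2^m,2^{m-2}}$ arising in case (3) of \cref{thm:twopowers}. In short: your plan is correct in outline, but you re-derive rather than re-use \cref{lem:2gens} and consequently leave unfinished the exact computation that the lemma was designed to do.
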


\begin{proof}
Let $\mathcal{I}$ denote the ideal generated by the elements from the theorem statement and let $X = \sum_{i=1}^s (t_p-p) \in \mathcal{I}(C_N/C_{p_1\cdots p_s})$. Note that $\res{p_i}{p_1\dots p_s}(X) = t_{p_i}-p_i$ for all $1 < i < s$, so by \cref{lem:prime-gens}, $t_{p_i^j} - p_i^j$ is in each level of $\mathcal{I}$ where this element makes sense. Furthermore, this shows that $t_m-m$ for $m$ odd is in each level of $\mathcal{I}$ where it makes sense. Multiplying this element by $t_{2^j}$, we obtain $t_{2^jm}-mt_{2^j}$. 

Let $X = \sum_{i \mid M}m_it_i \in \TI{K}{F}(C_N/C_M)$ for some $M \mid N$. We wish to show that $X\in\mathcal{I}$ as well. Let $2^k$ be the largest power of two dividing $M$ and let $m' = \frac{M}{2^k}$. Since all divisors of $m'$ will be odd, we can consider the element \[Y:=X-\sum_{j=0}^k\sum_{i\mid m'}m_{2^ji}(t_{2^ji}-it_{2^j}) \in \TI{K}{F}(C_N/C_M).\]

Since each summand is in $\mathcal{I}(C_N/C_M)$, it is sufficient to show that we have $Y\in \mathcal{I}(C_N/C_M)$. Write $Y = \sum_{i=0}^kn_it_{2^i}$ for some $n_i \in \ZZ$. Thus we need to show that $Y \in \mathcal{I}(C_N/C_m)$. We know that $\res{2^k}{M}(Y) = \sum_{i=0}^kn_it_{2^i} \in \TI{K}{F}(C_N/C_{2^k})$, and since $\TI{K}{F}(C_N/C_{2^k}) = \mathcal{I}(C_N/C_{2^k})$ by definition, this element is some combination of the generators in $\sr{G}$. Hence we need only show that the element $\sum_{i = 0}^ja_jt_{N/2^jm', 2^i}$ is in $\mathcal{I}$ for each generator $ \sum_{i = 0}^ja_jt_{N/2^j, 2^i}$. 

Note that \cref{lem:prime-gens} and \cref{lem:2gens} imply the desired result when $\sr{G}$ is given by case (1) or (2) of \cref{thm:twopowers}. Therefore we need only consider (3). However, \cref{lem:prime-gens} and \cref{lem:2gens} imply the desired result for all generators other than $t_{2^{m}, 2^{m}} - t_{2^{m}, 2^{m-1}} - 2t_{2^{m}, 2^{m-2}}$ where $m$ is as in \cref{thm:twopowers}. But a straightforward adaptation of the argument from \cref{lem:2gens} applies to this case as well, which completes the proof. 
\end{proof}

\begin{corollary}
For $N$ given as above, the absolute trace ideal can be calculated as 
$$\mathcal{T}_{C_N} = \gent{X}$$
where $X = \sum_{p \text{ odd}}(t_{\hat{n}, p}-p) + Y$, $\hat{n} = 2^\lambda p_1\cdots p_s$, $\lambda = \min\{3, \mu\}$ and 
$$ Y = \left\{\begin{array}{cc}
         0 & \lambda = 0; \\
         2t_{2,2}-4 & \lambda = 1;\\
         t_4-t_2-2 & {\rm otherwise.}
         \end{array}\right.$$
\end{corollary}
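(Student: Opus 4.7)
The plan is to invoke \cref{thm:TI for Cn} to decompose each trace ideal $\TI{K}{F}$, for a $C_N$-Galois extension, into an odd-prime piece $\sum_{i=1}^s(t_{p_i}-p_i)$ and a 2-part generator $\sr{G}$ determined by $K/K^{C_{2^\mu}}$. Since $\mathcal{T}_{C_N} = \bigcap \TI{K}{F}$ by definition, the odd-prime piece is extension-independent and so persists in the intersection. After applying the restriction/transfer argument from \cref{lem:prime-gens} to lift it to level $C_N/C_{\hat n}$, it contributes exactly $\sum_{p \text{ odd}}(t_{\hat n, p} - p)$ to any candidate generator of $\mathcal{T}_{C_N}$.

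For the 2-part generator $Y$, I would split by $\mu$ (equivalently $\lambda = \min\{3,\mu\}$). When $\mu = 0$, there is no 2-part and $Y = 0$. When $\mu = 1$, \cref{thm:c2} gives $\sr{G} = \tau_F(\Delta)(t_{2,2}-2)$ for each extension, and the element common to every such principal ideal (across the universally allowable Pythagoras conditions) is $Y = 2t_{2,2}-4$. When $\mu = 2$ (so $\lambda = 2$), \cref{thm:c4} identifies $t_4 - t_2 - 2$ as the generator of every trace ideal whose $C_4$-subextension embeds into a $C_8$-extension, and this is the element common to the intersection. When $\mu \geq 3$ (so $\lambda = 3$), case (1) of \cref{thm:twopowers} again yields $t_4 - t_2 - 2$ as the universal generator, and restriction from higher levels shows no further relations appear. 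In all three nontrivial cases, $Y$ lives in $\burn(C_N/C_{\hat n})$ because $\hat n = 2^\lambda p_1 \cdots p_s$ provides the necessary 2-part $2^\lambda$.

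Then I would assemble $X = \sum_{p \text{ odd}}(t_{\hat n, p} - p) + Y \in \burn(C_N/C_{\hat n})$ and verify $\gent{X} = \mathcal{T}_{C_N}$ in two steps. First, the forward containment $\gent{X} \subseteq \mathcal{T}_{C_N}$ requires showing $X \in \TI{K}{F}$ for each allowable extension, which follows by decomposing $X$ into its odd-prime and 2-part pieces and appealing to \cref{thm:TI for Cn}. Second, for the reverse containment, I would apply Tambara operations to extract individual generators from $X$: restriction from $C_{\hat n}$ to $C_{p_i}$ kills the other odd-prime terms and the 2-part $Y$ (since $\gcd(p_i, 2)=1$ and the odd-prime cross terms vanish in the appropriate Burnside ring), recovering $t_{p_i}-p_i$, while restriction to the subgroup of the appropriate 2-power extracts $Y$. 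The main obstacle will be the second step when $\lambda \geq 2$: the interaction between $Y$ and the odd-prime summands at level $C_N/C_{\hat n}$ is nontrivial, and I will need to mirror the bookkeeping in the proof of \cref{thm:TI for Cn} — subtracting off appropriate norms and transfers of the odd-prime pieces — to isolate $Y$ cleanly, and then push the universal 2-part generator through the Tambara maps to recover the extension-specific $\sr{G}$'s across all subgroups.
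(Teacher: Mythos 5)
Your proposal bypasses the two external inputs that the paper's proof hinges on, and the gap this opens is not something your direct intersection argument closes. The paper's entire proof is one paragraph: it invokes a theorem of Epkenhans that the absolute trace ideal is realized as an intersection of \emph{finitely many} trace ideals, combines this with Epkenhans' independent computation of $\mathcal{T}(G/G)$ for cyclic $2$-groups, and then leverages the structure theorems (\cref{thm:odd deg Cn extn}--\cref{thm:TI for Cn}) to deduce the full Tambara ideal from the top-level answer. Your argument never mentions Epkenhans, and instead tries to compute $\bigcap_{K/F}\TI{K}{F}$ directly by intersecting the possible $2$-part generators across all extensions.

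That direct approach has a genuine hole. When $\mu=1$ you claim the common element across all the principal ideals $\gent{\tau_F(\Delta)(t_{2,2}-2)}$ is $2t_{2,2}-4$, attributed to ``universally allowable Pythagoras conditions,'' but you never establish what those conditions are or why they force $\tau_F(\Delta)\in\{1,2\}$ for \emph{every} $C_N$-extension that occurs in the intersection. A priori, extensions with $\tau_F(\Delta)=2^k$ for large $k$, or even $\tau_F(\Delta)=0$ (the formally real case, where $\TI{K}{F}$ collapses entirely), would drive the intersection strictly below, or to zero. The same issue recurs in your $\lambda=2$ and $\lambda\geq 3$ cases: you assert that $t_4-t_2-2$ is ``the element common to the intersection'' without ruling out extensions whose trace ideal does not contain it. Epkenhans' finiteness theorem and his explicit computation of $\mathcal{T}(G/G)$ are precisely what pins down the answer and excludes these pathologies; without citing them (or re-proving them, which would be a substantial undertaking) your reverse containment $\mathcal{T}_{C_N}\subseteq\gent{X}$ is unsupported, and even your forward containment $\gent{X}\subseteq\mathcal{T}_{C_N}$ — which requires $X\in\TI{K}{F}$ for \emph{every} admissible $K/F$ — is not actually verified, only asserted to ``follow by decomposing $X$.''

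To salvage the proof you would need to explicitly invoke \cite{epkenhans:1999} for the finiteness of the intersection and \cite[Proposition 5]{epkenhans:1998} for the value of $\mathcal{T}(C_{2^\lambda})$ at the top level. Once you have those, the rest of your outline (lifting the odd-prime piece via \cref{lem:prime-gens}, propagating the $2$-part via \cref{lem:2gens}, and assembling $X\in\burn(C_N/C_{\hat n})$) is the right bookkeeping and aligns with the spirit of what the paper does.
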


\begin{proof}
A theorem of Epkenhans says that the absolute trace ideal is an intersection of finitely many trace ideals \cite{epkenhans:1999}. This, along with our computations of the trace ideal and Epkenhans computation of $\mathcal{T}(G/G)$ imply the desired result. 
\end{proof}

\section{The Trace Ideal for Profinite Extensions}\label{sec:profinite}
Let $\ZZ_p$ and $\hat{\ZZ}$ denote the $p$-adic integers and the profinite completion of the integers, respectively. By our work in the previous section, we can compute the trace ideal for $K/F$ where $\Gal(K/F)$ is either of these profinite groups.

\subsection{Tambara maps in the Burnside functor} For $K \mid  M \in \ZZ$, let $k = \frac M K$. We denote the $M\hat{\ZZ}$-set $M\hat{\ZZ}/K\hat{\ZZ}$ by $t_{M, k}$, and again the first index will be dropped when it can be inferred from context. With this identification, we clearly obtain the same formula for multiplication
 \begin{equation}
    t_{k}t_{j} = \text{gcd}(k, j)t_{\text{lcm}(k, j)}.
 \end{equation}

The Tambara maps given in Equations (3.3--3.5) follow analogously. Specifically, for arbitrary elements $X=\sum_{i=1}^{n_1} a_it_{i}\in \burn(\hat\ZZ/N\hat \ZZ)$ and $Y=\sum_{i=1}^{n_2} b_it_{i}\in \burn(\hat\ZZ/M\hat\ZZ)$ for some $n_1,n_2\in\NN$, we have\begin{align}
    \res{N}{M}(Y)&:= \res{N\hat\ZZ}{M\hat\ZZ}(Y)=\sum_{i=1}^{n_2} b_id_it_{ \frac{i}{d_i}},\\
    \tr{N}{M}(X)&:= \tr{N\hat\ZZ}{M\hat\ZZ}(X) = \sum_{i=1}^{n_1} a_it_{ik},\\
    \N{N}{M}(X)&:=\N{N\hat\ZZ}{M\hat\ZZ}(X) = \sum_{i=1}^{n_2}\frac{C(i)}{i}t_{i},
\end{align} where $d_i={\rm gcd}(i,k)$ and \[C(i) = \Bigg(\sum_{j\mid \frac{{\rm lcm}(i, k)}{k}}ja_j\Bigg)^{{\rm gcd}(i, k)} - \sum_{j \mid i,\; j < i}C(j)\] as before. Conjugation is again trivial.

Identifying the $p^n\ZZ_p$-set $p^n\ZZ_p/p^m\ZZ_p$ with $t_{p^n, p^{m-n}}$ allows us to similarly adapt these formulas for the $p$-adic integers. Multiplication carries over exactly, while the restriction, transfer and norm formulas translate by summing over powers of $p$. 

\subsection{The trace ideal for profinite groups} Before examining $\hat\ZZ$, it is enlightening to examine the $p$-adic case. By the identification given above, we get an analogous description of the trace ideal for $p$ odd as in \cref{thm:odd deg Cn extn}. When $p=2$, our description is comparable to that of case (1) from \cref{thm:twopowers}.

\begin{theorem}\label{thm:p-adic trace ideal}
Let $G=\ZZ_p$ for some prime $p$. If $p$ is odd, the trace ideal has level-wise description given by\[
\TI{K}{F}(\ZZ_p/p^n\ZZ_p) = ( t_{p^m} - p^m \;:\; m\in \NN),
\] which implies
\[
\TI{K}{F} = \gent{t_{p^i, p}-p~:~i\in\NN}.
\]

For $p = 2$, the trace ideal has the level-wise description
\[
\TI{K}{F}(\ZZ_2/2^n\ZZ_2) = ( t_{2^m} +t_2- 2^m-2 \;:\; m\in \NN),
\] which implies
\[
\TI{K}{F} = \gent{ t_{2^i, 4} - t_{2^i, 2} - 2 ~:~i\in \NN}.
\]
\end{theorem}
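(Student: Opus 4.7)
The plan is to reduce to the finite cyclic case treated in \cref{sec:A(Cn)} and Section 4 via the finite sub-extensions of $K/F$. For each open subgroup $p^n\ZZ_p \leq \ZZ_p$, set $L_n := K^{p^n\ZZ_p}$. Any element of $\burn(\ZZ_p/p^n\ZZ_p)$ is a finite $\ZZ$-linear combination of transitive sets $p^n\ZZ_p/p^m\ZZ_p$, and hence lies in the image of $\burn(C_{p^{N-n}}) \hookrightarrow \burn(\ZZ_p/p^n\ZZ_p)$ for all sufficiently large $N$, via the quotient $p^n\ZZ_p \twoheadrightarrow C_{p^{N-n}}$. Since the Dress map sends $p^n\ZZ_p/p^m\ZZ_p$ to $\tr{L_n}{L_m}\gen{1} \in GW(L_n)$, it restricts on this subring to the Dress map for the finite Galois extension $L_N/L_n$. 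Consequently,
\[
\TI{K}{F}(\ZZ_p/p^n\ZZ_p) \;=\; \bigcup_{N \geq n} \TI{L_N}{L_n}(C_{p^{N-n}}/C_{p^{N-n}}).
\]

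The level-wise formulas then follow from the finite results. When $p$ is odd, \cref{thm:odd deg Cn extn} applied to $C_{p^{N-n}}$ identifies the top-level trace ideal as $(t_{p^m} - p^m : 0 \leq m \leq N-n)$. When $p = 2$, every finite sub-extension $L_N/L_n$ embeds into the strictly larger cyclic $2$-extension $L_{N+1}/L_n$, since the tower never terminates. Hence case (1) of \cref{thm:twopowers} applies (with \cref{thm:c2} and \cref{thm:c4} handling $N-n = 1$ and $2$): $\TI{L_N}{L_n}$ is generated as a Tambara ideal by $t_{4,4} - t_{4,2} - 2$, and \cref{lem:2gens} spreads this element to yield the top-level description $(t_{2^m} + t_2 - 2^m - 2 : 1 \leq m \leq N-n)$. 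In both cases, the union over $N$ produces the claim.

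For the Tambara ideal presentation, membership of each proposed generator in the trace ideal is immediate from the level-wise formula just established, applied at the relevant level. For the reverse inclusion, I produce every level-wise generator from the proposed family: to obtain $t_{p^m} - p^m$ (resp.\ $t_{2^m} + t_2 - 2^m - 2$) at level $p^n\ZZ_p$, I take the proposed generator at a sufficiently deep level $p^N\ZZ_p$ --- with $N$ chosen so that the $p$-adic valuation of $|p^n\ZZ_p : p^N\ZZ_p| = p^{N-n}$ exceeds $m$ --- and run the inductive procedure of \cref{lem:prime-gens} (resp.\ \cref{lem:2gens}). These lemmas rely only on the explicit Tambara formulas of \cref{sec:A(Cn)}, which \cref{sec:profinite} has already observed carry over verbatim to the profinite setting.

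The principal obstacle is accounting for the necessity of an infinite family of generators rather than a single one. A shallow generator cannot suffice by itself: restriction from $\burn(\ZZ_p)$ to $\burn(p^n\ZZ_p)$ for $n \geq 1$ kills $t_p - p$, so shallow generators contribute nothing at deeper levels. The argument must therefore reach into the tower at each level, drawing on a generator at $p^N\ZZ_p$ with $N$ large relative to the target $t_{p^m}$. Formalising this uniform-in-$m$ behaviour reduces to a verbatim translation of the finite inductive arguments of \cref{sec:A(Cn)} into the profinite bookkeeping, which is routine given the formal compatibility of the Tambara structure maps.
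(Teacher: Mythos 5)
Your proof is correct, and its core ingredients --- deep generators trickle up via \cref{lem:prime-gens} and \cref{lem:2gens}, while the never-terminating tower supplies the $C_8$-embedding needed for the $p=2$ lower bound --- are the same as the paper's. Where you differ is in the packaging: you reduce everything to the finite case up front via the identity
\[
\TI{K}{F}(\ZZ_p/p^n\ZZ_p) \;=\; \bigcup_{N\geq n}\TI{K^{p^N\ZZ_p}}{K^{p^n\ZZ_p}}\bigl(C_{p^{N-n}}/C_{p^{N-n}}\bigr),
\]
importing \cref{thm:odd deg Cn extn} and \cref{thm:twopowers} wholesale, whereas the paper re-runs the arguments of \cref{thm:odd deg Cn extn} and \cref{lem:2-group upper bound} directly at the profinite level and only dips into the finite results through the embedding observation and \cref{thm:c4}. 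Your version is somewhat more self-contained, since the union identity makes precise what the paper means by ``analogous,'' and your closing remark that restriction kills shallow generators is the right explanation for why the generating family must be infinite --- a point the paper only records in the discussion following \cref{thm:alg closure}.
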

\begin{proof}
When $p$ is an odd prime, the proof of the level-wise description is analogous to the one given in \cref{thm:odd deg Cn extn}. The second description follows from \cref{lem:prime-gens}. When $p=2$, let $\mathcal{I}$ be the ideal claimed in the theorem statement. By the same arguments as in \cref{lem:2-group upper bound} we have that $\TI{K}{F}\subseteq \mathcal{I}$. To show the other inclusion note that for all $j$, $K^{2^j\ZZ_2}/K^{2^{j+2}\ZZ_2}$ embeds into a $C_8$ extension. Thus by \cref{thm:c4} we have that $t_4-t_2-2\in\TI{K}{F}(\ZZ_2/2^j\ZZ_2)$. An application of \cref{lem:2gens} shows that the rest of the generators are in $\TI{K}{F}$. 
\end{proof}

\begin{theorem}\label{thm:alg closure}
Let $K/F$ be a Galois extension with $G=\hat\ZZ$. Then $\TI{K}{F}$ is generated by $t_{2^n,4}-t_{2^n,2}-2\in\burn(\hat\ZZ/2^n\hat\ZZ)$ and $t_{p^n, p}-p\in \burn(\hat\ZZ/p^n\hat\ZZ)$ for odd primes $p$ and all $n\in \NN$. 
\end{theorem}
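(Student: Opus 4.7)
Let $\mathcal{I}$ denote the Tambara ideal of $\burn_{\hat\ZZ}$ generated by the listed elements. The plan is to establish $\mathcal{I}=\TI{K}{F}$ by proving both inclusions. For $\mathcal{I}\subseteq\TI{K}{F}$, the approach is to verify each generator lies in the trace ideal by a direct Dress-map computation. For odd $p$, the element $t_{p^n,p}-p$ is sent by $\Dr$ to $\tr{K^{p^{n+1}\hat\ZZ}}{K^{p^n\hat\ZZ}}\gen{1} - p\gen{1}$, which vanishes because the odd-degree trace form equals $p\gen{1}$ (as in the proof of \cref{thm:odd deg ker card}). For the 2-primary generator $t_{2^n,4}-t_{2^n,2}-2$, the key observation is that $2^n\hat\ZZ\cong\hat\ZZ$, so the $C_4$-subextension $K^{2^{n+2}\hat\ZZ}/K^{2^n\hat\ZZ}$ embeds into the $C_8$-subextension $K^{2^{n+3}\hat\ZZ}/K^{2^n\hat\ZZ}$; case (1) of \cref{thm:c4} applied to the former then produces the desired vanishing of $\Dr$ on this element.

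For the reverse inclusion, fix $N\in\NN$ and $X=\sum a_k t_{N,k}\in\TI{K}{F}(\hat\ZZ/N\hat\ZZ)$, a finite sum. Choose $M\in\NN$ divisible by every $k$ in the support of $X$ and by a sufficiently large power of $2$, and set $L:=K^{NM\hat\ZZ}$. Under the natural embedding sending $t_{M,k}\in A(C_M)$ to $t_{N,k}\in\burn_{\hat\ZZ}(\hat\ZZ/N\hat\ZZ)$, the Dress map for the finite $C_M$-Galois extension $L/K^{N\hat\ZZ}$ at level $C_M/C_M$ agrees with the restriction of $\Dr^{K/F}$. Thus $X\in\TI{L}{K^{N\hat\ZZ}}(C_M/C_M)$, and \cref{thm:TI for Cn} expresses $X$ through Tambara operations on the odd-prime generator $\sum_i(t_{p_i}-p_i)$ and the 2-primary generator $\sr{G}$ from \cref{thm:twopowers}. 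Since $K$ over the 2-primary bottom field $L^{C_{2^\mu}}$ is itself a $\hat\ZZ$-extension, the relevant $C_4$-subextension of $L/L^{C_{2^\mu}}$ embeds into a $C_8$-extension inside $K$; this places us in case (1) of \cref{thm:twopowers}, so $\sr{G}=t_{4,4}-t_{4,2}-2$.

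It then remains to see that each of these finite-case generators lies in $\mathcal{I}$ after the identification. The individual $t_{p_i}-p_i$ pieces can be obtained from $t_{p_i^n,p_i}-p_i$ via restrictions, transfers, and multiplicative operations as in \cref{lem:prime-gens}, and the 2-primary piece $t_{4,4}-t_{4,2}-2$ arises as the image of $t_{2^n,4}-t_{2^n,2}-2$ under an analogous chain of Tambara operations at the appropriate level. The main obstacle is the careful bookkeeping required to match levels of the finite Tambara subfunctor $\burn_{C_M}$ with those of $\burn_{\hat\ZZ}$, and to reproduce each required generator uniformly in $N$ using only the operations afforded by $\mathcal{I}$; this will follow the same combinatorial pattern as in the proofs of \cref{thm:TI for Cn} and \cref{thm:p-adic trace ideal}, which together supply the necessary machinery.
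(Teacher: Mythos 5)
Your proof is correct in outline and reaches the same conclusion, but the reduction you use is genuinely different from the paper's. The paper's (quite terse) argument leans on \cref{thm:p-adic trace ideal}: it notes that for each prime $p$ the pro-$p$ subextension is captured by the $\ZZ_p$-computation, so the listed generators already produce every $t_{p^m,p^n}-p^n$ in $\mathcal{I}(\hat\ZZ/p^m\hat\ZZ)$, and then passes to a general level $\hat\ZZ/m\hat\ZZ$ by restriction from the appropriate $p$-power level. You instead fix a level $\hat\ZZ/N\hat\ZZ$ and an element $X$ there, pick a single finite modulus $M$ large enough to capture the support of $X$ (and a suitable power of $2$), and invoke \cref{thm:TI for Cn} for the finite cyclic subextension $K^{NM\hat\ZZ}/K^{N\hat\ZZ}$; you then pull the finite generators back up. The paper's route is shorter because \cref{thm:p-adic trace ideal} already packages the levelwise data uniformly in $n$, whereas your route has to rechoose $M$ for each $X$ and then verify the identification of Dress maps across the finite and profinite pictures. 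On the other hand, your write-up is more explicit about both inclusions: the paper never spells out that the generators actually lie in $\TI{K}{F}$ (you check this directly, using the $\gen{K^H}=[K^H:F]\gen{1}$ fact in odd degree and case (1) of \cref{thm:c4} for the $2$-primary generator, noting that inside a $\hat\ZZ$-extension every $C_4$-subextension embeds into a $C_8$). Both arguments rely in the end on the same lemmas (\cref{lem:prime-gens} and \cref{lem:2gens}) to propagate generators between levels. One small nit: the phrase ``applied to the former then produces the desired vanishing'' elides the step that identifies $\Dr^{K/F}$ at level $\hat\ZZ/2^n\hat\ZZ$ with $\Dr^{K'/F'}$ at level $C_4/C_4$ for the finite subextension $K'/F'$; this identification is correct but deserves a sentence, exactly as you do supply it later in the $\TI{K}{F}\subseteq\mathcal{I}$ direction.
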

\begin{proof}
Let $\mathcal{I}$ be the Tambara ideal generated by the elements from the theorem statement. For a prime $p$, the $p$-adic extensions are sub-extensions of the algebraic closure, so \cref{thm:p-adic trace ideal} tells us we have $t_{p^m,p^n}-p^n\in\mathcal{I}(\hat{\ZZ}/p^m\hat{\ZZ})$ for all odd $p$ and $n\geq 1, m \in \NN$. Note that this includes $\mathcal{I}(\hat{\ZZ}/\hat{\ZZ})$, the case where $m=0$. To show that $t_{p^n}-p^n \in \mathcal{I}(\hat{\ZZ}/m\hat{\ZZ})$, let $k$ be the largest power of $p$ dividing $m$. Then $\frac{m}{p^k}$ is relatively prime to $p$, so the restriction is $\res{m\ZZ}{p^k\ZZ}(t_{m,p^n}-t_{m,p}) = t_{p^k,p^n}-t_{p^k,p}$. The same argument works for the generators associated with $p=2$.
\end{proof}

The trace ideal for these profinite groups is clearly not finitely generated, as $t_4-t_2-2$ must be in each level of the trace ideal yet is not in the image of the restriction map. However, if we define $\sr{I}_n$ to be the ideal generated by $t_{n, 4}-t_{n, 2}-2$ for $G = \ZZ_2$ and the ideal generated by $t_{n, p} - p$ for $G = \ZZ_p$, we see that $\sr{I}_1\subseteq \sr{I}_2\cdots \subseteq \sr{I}_n \subseteq \cdots$ and $\TI{K}{F} = \bigcup\sr{I}_n$. For the case of $G = \hat{\ZZ}$, let $n \geq 4$, $m = n!$, and take $\sr{I}_n$ be the ideal generated by the element $t_{m, 4}-t_{m,2}-2 +\sum_{p < n}t_{m, p}-p$. Then we similarly have $\sr{I}_4\subseteq \sr{I}_5\cdots \subseteq \sr{I}_n \subseteq \cdots$ and $\TI{K}{F} = \bigcup\sr{I}_n$. So in all cases considered, $\TI{K}{F}$ is the union of an ascending chain of strongly principal ideals.


\section{Some Applications and Examples}\label{sec:applications}

We can apply our computations to some examples of interest. In particular, \cref{thm:c2} allows us to determine the trace ideal for common quadratic extensions. We can also use the trace ideal of a quadratic extension to gain insight into the structure of the base field. Finally, we can apply our results to completely describe the trace ideal for both finite and profinite extensions of finite fields.

\subsection{Quadratic extensions} Recall that our characterization of the trace ideal for a cyclic 2-extension $F(\sqrt{\alpha})/F$ depends on the number such that $\alpha$ is a sum of that many squares. We thus get the following direct corollaries of \cref{thm:c2}:
\begin{corollary}
We have the following computations:
\begin{enumerate}
    \item $\TI{\CC}{\RR} = \gent{0}$ implying by \cref{rem:surjective} that $\burn_{C_2}\cong \tamb{GW}_{\RR}^{\CC}$.
    \item For $r=\frac{a}{b}\in\QQ$ a non-square with $ a,b\in\ZZ$, 
    $$\TI{\QQ(\sqrt{r})}{\QQ} = \left\{\begin{array}{cc}
         \gent{0} & r < 0; \\
         \gent{2t_{2,2}-4} & ab = x^2+y^2,\;\; x, y \in \ZZ; \\
         \gent{4t_{2,2}-8} & {\rm otherwise.}
         \end{array}\right.$$
    \item For a finite field $\FF_q$, 
    $$ \TI{\FF_{q^2}}{\FF_{q}} = \gent{2t_{2,2}-4}.$$
\end{enumerate}
\end{corollary}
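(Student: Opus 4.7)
The plan is to apply \cref{thm:c2} in each case, so the work reduces to identifying the discriminant $\Delta$ of the relevant quadratic extension and then computing $\tau_F(\Delta)$ from \cref{defn:tau}.

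For (1), the extension $\CC/\RR$ has discriminant $-1$. Since $-1$ is not a sum of squares in $\RR$ (equivalently, $\RR$ admits an ordering in which $-1$ is negative), $\tau_\RR(-1)=0$, so $\TI{\CC}{\RR}=\gent{0}$ by \cref{thm:c2}. The second assertion then follows from \cref{rem:surjective}, since $\CC$ is quadratically closed.

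For (2), writing $r=a/b$ with $a,b\in\ZZ$, the square class of $r$ in $\QQ^\times/\QQ^{\XX}$ equals that of $ab$, so we may replace $\Delta$ by $ab$. When $r<0$ we have $ab<0$, so $ab$ is negative with respect to the usual ordering of $\QQ$ and $\tau_\QQ(ab)=0$. When $ab>0$ is a sum of two integer squares (equivalently, a sum of two rational squares by clearing denominators), the nonsquareness of $r$ forces $\tau_\QQ(ab)=2$, giving the generator $2(t_{2,2}-2)=2t_{2,2}-4$. Otherwise, Lagrange's four-square theorem applied to the positive rational $ab$ yields $\tau_\QQ(ab)=4$, giving $4(t_{2,2}-2)=4t_{2,2}-8$; here one also uses that $\tau_\QQ(ab)\neq 1$ (as $r$ is not a square). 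In each subcase the conclusion follows from \cref{thm:c2}.

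For (3), recall that any quadratic extension of $\FF_q$ has the form $\FF_q(\sqrt{\Delta})$ for some non-square $\Delta\in\FF_q^\times$. The well-known fact that every element of a finite field of odd characteristic is a sum of two squares then gives $\Delta\in D_{\FF_q}(2)\setminus\FF_q^{\XX}$, so $\tau_{\FF_q}(\Delta)=2$ and \cref{thm:c2} yields $\TI{\FF_{q^2}}{\FF_q}=\gent{2t_{2,2}-4}$. The only real step with any substance is the square-class identification in (2); the remaining computations are direct invocations of the relevant sum-of-squares facts. I expect no serious obstacle beyond bookkeeping.
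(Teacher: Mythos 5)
Your proposal is correct and follows essentially the same route as the paper: in each case you identify the discriminant, compute $\tau_F(\Delta)$ using the relevant sum-of-squares facts ($-1$ not a sum of squares in $\RR$; square-class reduction from $a/b$ to $ab$ plus Lagrange's theorem over $\QQ$; every element of $\FF_q$ is a sum of two squares), and invoke \cref{thm:c2}. One small remark: the equivalence between ``$ab$ is a sum of two rational squares'' and ``$ab$ is a sum of two integer squares'' is not quite just ``clearing denominators'' — it needs a descent argument (Davenport–Cassels) — but the paper elides this point in the same way, so this does not constitute a gap relative to the paper's own treatment.
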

\begin{proof}
Part (1) is immediate by noting that $-1$ is negative and therefore not a sum of squares. Part (2) follows from the fact that any positive rational $\frac ab$ is the sum of four squares, and $\frac ab$ is a sum of two squares if and only if $ab$ is as well. Finally, (3) follows from recalling that every element of a finite field is the sum of two squares.
\end{proof}

This result also yields a characterization of Pythagorean and formally real fields in terms of the trace ideals they admit for quadratic extensions. 

\begin{corollary} Let $F$ be a field. Then
\begin{enumerate}
    \item $F$ is formally real if and only if $\TI{K}{F} = \gent{0}$ for some quadratic extension $K/F$.
    \item $F$ is Pythagorean if and only if $\TI{K}{F} = \gent{0}$ for all quadratic extensions $K/F$.
\end{enumerate}
\end{corollary}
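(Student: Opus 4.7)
The plan is to apply \cref{thm:c2} to reduce both equivalences to statements about when $\tau_F(\Delta)$ vanishes, and then translate this into the defining sum-of-squares conditions for formal realness and the Pythagorean property.

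First, for any quadratic extension $K = F(\sqrt{\Delta})$ with discriminant $\Delta \in F^\times/F^{\XX}$, \cref{thm:c2} gives $\TI{K}{F} = \gent{\tau_F(\Delta)(t_{2,2}-2)}$. Since $t_{2,2}-2$ is nonzero, this ideal is $\gent{0}$ if and only if $\tau_F(\Delta) = 0$, which by \cref{defn:tau} is equivalent to $\Delta$ not being a sum of squares in $F$. So the whole corollary reduces to sum-of-squares criteria on possible discriminants.

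For part (1), the forward direction is immediate: if $F$ is formally real then $-1$ is not a sum of squares, so setting $K = F(\sqrt{-1})$ (which is a genuine quadratic extension since $-1 \notin F^{\XX}$) gives $\tau_F(-1) = 0$ and hence $\TI{K}{F} = \gent{0}$. For the converse I will prove the contrapositive: if $F$ is not formally real then no quadratic extension has trivial trace ideal. The key classical fact I will invoke is that if $-1 = s_1^2 + \cdots + s_n^2$ in $F$, then every element $x \in F$ is a sum of squares via the identity
\[
x = \left(\tfrac{x+1}{2}\right)^2 + \sum_{i=1}^n \left(s_i \cdot \tfrac{x-1}{2}\right)^2,
\]
so $\tau_F(\Delta) \neq 0$ for every $\Delta \in F^\times$ and no quadratic extension can achieve $\TI{K}{F} = \gent{0}$.

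For part (2), if $F$ is Pythagorean, then the nonzero sums of squares form exactly $F^{\XX}$. Any quadratic extension has the form $F(\sqrt{\Delta})$ with $\Delta \notin F^{\XX}$, so $\Delta$ is not a sum of squares, giving $\tau_F(\Delta) = 0$ and thus $\TI{K}{F} = \gent{0}$. Conversely, if $\TI{K}{F} = \gent{0}$ for every quadratic extension, then every non-square $\Delta \in F^\times$ satisfies $\tau_F(\Delta) = 0$, so no non-square is a sum of squares; equivalently every nonzero sum of squares is a square, i.e. $F$ is Pythagorean.

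The main subtlety is to handle the corner cases where $F$ may have few or no quadratic extensions. For (2) this is harmless: if $F$ is quadratically closed, the statement is vacuously true in the forward direction and the field is automatically Pythagorean (every element is a square). For (1), the non-formally-real case is handled by the sum-of-squares identity above, which is the only non-formal step in the argument; everything else is a direct invocation of \cref{thm:c2} and \cref{defn:tau}.
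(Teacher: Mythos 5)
Your proposal is correct and takes essentially the same route as the paper: both arguments pass through \cref{thm:c2} to reduce the ideal-theoretic statement to the vanishing of $\tau_F(\Delta)$, and then invoke the standard characterizations of formally real and Pythagorean fields in terms of which field elements are sums of squares. You spell out the sum-of-squares identity for the non-formally-real case and note the quadratically-closed corner case explicitly, which the paper leaves implicit, but the underlying argument is the same.
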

\begin{proof} 
(1) The trace ideal is zero for an extension $F(\sqrt{\alpha})/F$ when $\alpha$ is not a sum of squares in $F$. If there is such an $\alpha$, then $F$ is formally real. If instead we suppose that $F$ is formally real, we can take $\alpha=-1$.

(2) Recall that in a Pythagorean field, any sum of squares is itself a square. Hence if we have a quadratic extension of a Pythagorean field, the discriminant cannot be a sum of squares. Similarly, if $\TI{F(\sqrt{\alpha})}{F} = \gent{0}$ for every quadratic extension $F(\sqrt{\alpha})/F$, then every non-square $\alpha\in F^\times$ is not a sum of squares.
\end{proof}

\subsection{Finite fields} The work of the previous sections allows us to give a complete description of the trace ideal for extensions of finite fields. We let $\FF_q$ denote the finite field with $q$ elements, where $q$ is a power of an odd prime. The Grothendieck-Witt ring on $\FF_q$ has a particularly unique structure, which makes this family of fields a rich source of study. 

Over a finite field, every quadratic form is universal \cite[Proposition II.3.4]{lam:2005} and so is completely specified in $GW(\FF_q)$ by its dimension and determinant \cite[Theorem II.3.5(1)]{lam:2005}. Recall that the determinant is determined up to square class $F^{\times}/F^{\XX}$, and moreover $\abs{\FF_q^{\times}/\FF_q^{\XX}}=2$ (cf. \cite[\S II.3]{lam:2005}). We denote the two square classes by $1$ and $\alpha$. Note that $\alpha$ is a sum of two squares, and we may take $\alpha=-1$ if and only if $q\equiv 3\pmod 4$. In any case, we have $GW(\FF_q)\cong \ZZ \oplus \FF_q^\times/\FF_q^{\XX}$ where the isomorphism is given by $\gen{a_1,\dots,a_n}\mapsto (n, ~ \prod_i a_i\FF_q^{\XX})$.

This simplified presentation of the Grothendieck-Witt ring permits an explication of the Tambara structure of $\tamb{GW}$ on a finite field, the details of which are worked out in 
\cref{app:HM} by H.~Chen and X.~Chen. These computations help us describe the Dress map and the trace ideal for finite fields. In particular, any finite extension $\FF_{q^N}/\FF_q$ will have Galois group $C_N$. The Dress map sends 
\[
X=\sum_{i\mid M}a_it_i \in \burn(C_N/C_M) \longmapsto \bigg(\sum_{i}ia_i,~\prod_{i~{\rm even}}\alpha^{a_i}\bigg) \in \tamb{GW}(\FF_{q^M})
\] where $\alpha$ generates the non-square class of $\FF_{q^M}$, and applying \cref{thm:TI for Cn} then gives us a complete description of the trace ideal. In particular, we find that the trace ideal is strongly principal in this case.

Now consider $\FF_q$ inside of its quadratic or algebraic closure, which have Galois groups $\ZZ_2$ and $\hat{\ZZ}$, respectively. The Dress map is described by a similar formula as given above, and we can apply the work of \cref{sec:profinite} to describe the trace ideal. In both these profinite cases, the top field is clearly quadratically closed, so \cref{rem:surjective} applies.

\begin{theorem}
\label{thm:finite fields}
Let $Q$ and $K$ denote the quadratic and algebraic closure of $\FF_q$, respectively. Then we have the following calculations: \begin{enumerate}
    \item   Suppose $N$ has prime decomposition $2^{\mu}p_1^{\sigma_1}\cdots p_m^{\sigma_m}$ and take $\hat N=2^{\hat\mu}p_1\cdots p_s$ with \[\hat\mu=\left\{\begin{array}{cc}
     \mu&  \mu=0,1;\\
     2& \mu\geq 2.
\end{array}\right.\] Then \[
\TI{\FF_{q^N}}{\FF_q} = \bigg(\!\!\!\bigg(X_2 + \sum_{i=1}^s (t_{\hat N, p^i} -  p_i)\bigg)\!\!\!\bigg), 
\] where\[
X_2 = \left\{\begin{array}{cc}
         2\mu(t_{\hat N,2} - 2) & \mu = 0,1; \\
         t_{\hat N, 4}-t_{\hat N, 2} - 2
         & \mu \geq 2.
         \end{array}\right.
\]
    \item $\TI{Q}{\FF_q} = \gent{ t_{2^i, 4} - t_{2^i, 2} - 2 }$ where $i$ ranges over all of $\NN$.
    \item $\TI{K}{\FF_q}$ is the ideal given in \cref{thm:alg closure}.
\end{enumerate}
\end{theorem}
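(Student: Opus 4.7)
The plan is to handle the three parts separately: parts~(2) and (3) fall out quickly from the results of \cref{sec:profinite}, while part~(1) requires a bit of bookkeeping to identify the 2-part generator and then package everything at a single level. Throughout I would use three facts about finite fields: every finite cyclic extension exists, every element is a sum of two squares, and $|\FF_q^\times/\FF_q^\XX| = 2$. Combined with the Dress-map formula recalled immediately before the theorem statement, these will do the heavy lifting.

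For part~(1), I would first identify the 2-part generator $\sr{G}$ from \cref{thm:TI for Cn} according to $\mu$. If $\mu=0$, there is no 2-part and $\sr{G}=0$. If $\mu=1$, the extension $\FF_{q^N}/\FF_{q^{N/2}}$ is quadratic with non-square discriminant $\Delta$; since every non-square in a finite field is a sum of exactly two squares (but not one), $\tau_{\FF_{q^{N/2}}}(\Delta)=2$, so \cref{thm:c2} gives $\sr{G}=2(t_{2,2}-2)$. If $\mu\geq 2$, the $C_4$ sub-extension $K/K^{C_4}$ embeds into any degree-$8$ cyclic extension of $K^{C_4}$ (such as $\FF_{q^{2N}}/K^{C_4}$), so case~(1) of \cref{thm:twopowers} (or \cref{thm:c4} when $\mu=2$) yields $\sr{G}=t_{4,4}-t_{4,2}-2$.

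Next I would show that the single element $W := X_2 + \sum_{i=1}^s(t_{\hat N, p_i}-p_i)$ at level $C_{\hat N}$ generates $\TI{\FF_{q^N}}{\FF_q}$ as a Tambara ideal. Membership $W\in\TI{\FF_{q^N}}{\FF_q}(C_N/C_{\hat N})$ follows by plugging $W$ into the explicit Dress-map formula: the integer summands cancel in dimension, and the square-class contribution is trivial because the $t_{\hat N,p_i}$ have odd index while the even-index coefficients appearing in $X_2$ produce an even exponent on $\alpha$ (a square). To show $W$ strongly generates, I would apply the restriction formulas from \cref{sec:A(Cn)} together with the pairwise coprimality of $2, p_1, \dots, p_s$: restriction of $W$ to $C_{p_1\cdots p_s}$ kills $X_2$ and collapses each $t_{\hat N,p_j}$ to $p_j$ for $j\neq i$, leaving $\sum_i(t_{p_i}-p_i)$; restriction to $C_{2^{\hat\mu}}$ kills the odd-prime terms and returns $X_2$ reinterpreted at that level, which is $\sr{G}$. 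Since both Tambara generators of \cref{thm:TI for Cn} thus lie in $\gent{W}$, we get $\gent{W}=\TI{\FF_{q^N}}{\FF_q}$.

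For part~(2), $\Gal(Q/\FF_q)=\ZZ_2$, and the only hypothesis used in the proof of \cref{thm:p-adic trace ideal} for $p=2$ is that each intermediate $C_4$ extension embeds into a $C_8$ extension, which is automatic here via the next subfield $K^{2^{j-1}\ZZ_2}$. Part~(3) is likewise an immediate invocation of \cref{thm:alg closure}, whose required finite cyclic extensions all sit inside $\overline{\FF_q}$. I expect the main obstacle to be the restriction computation in part~(1): it is a case analysis on $\hat\mu$ with several small coefficient checks, but once performed carefully the strong principality of $\TI{\FF_{q^N}}{\FF_q}$ follows cleanly.
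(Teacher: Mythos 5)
Your proposal is correct and follows essentially the same route as the paper: identify the 2-part generator $\sr{G}$ from \cref{thm:c2}/\cref{thm:c4}/\cref{thm:twopowers} according to $\mu$ (using that every element of a finite field is a sum of two squares, and that every $C_4$ subextension of $\overline{\FF_q}$ embeds in a $C_8$ extension), then show the single element $W$ at level $C_{\hat N}$ strongly generates by restricting to $C_{p_1\cdots p_s}$ and $C_{2^{\hat\mu}}$ as in the proof of \cref{thm:odd deg Cn extn}. Parts (2) and (3) you correctly reduce to \cref{thm:p-adic trace ideal} and \cref{thm:alg closure}, matching the paper exactly; the only thing your write-up adds over the paper's terse proof is the explicit restriction computation and the direct Dress-map check of membership, both of which are legitimate fillings-in of what the paper leaves to the reader.
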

\begin{proof}
Part (1) follows from \cref{thm:TI for Cn} and taking appropriate restrictions of the stated generator (as in the proof of \cref{thm:odd deg Cn extn}). Note that if $\mu\geq 2$, we are in case (1) of \cref{thm:twopowers} since we can always embed $\FF_{q^N}/\FF_q$ appropriately. Parts (2) and (3) follow directly from \cref{thm:p-adic trace ideal} and \cref{thm:alg closure}, respectively.
\end{proof}

\appendix 
\crefalias{section}{appsec}
\section{Norms of quadratic forms over finite fields}\label{app:HM}

\vspace{-0.25cm}

\section*{\small{by Harry Chen and Xinling Chen}}

\newcommand{\Res}{\operatorname{Res}}
\renewcommand{\res}{\operatorname{res}}
\renewcommand{\tr}{\operatorname{tr}}
\renewcommand{\N}{\operatorname{N}}

\maketitle
Restriction, Scharlau transfer (with respect to field trace), and the Rost norm give the Grothendieck-Witt ring the structure of a Tambara functor \cite{bachmann:2016}.  Working over a finite base field, the values of $\underline{GW}$ and its restriction and transfer maps are known classically.  Meanwhile the Rost norm has only been computed explicitly relative to quadratic extensions \cite{wittkop:2006}.  Leveraging the Dress map and the structure of the Burnside Tambara functor for cyclic groups, we completely determine the Rost norm for any extension of finite fields with odd characteristic in Theorem \ref{thm:main} below. Since the absolute Galois group of $\mathbb{F}_q$ is $\widehat{\mathbb{Z}}$, one may view this result as complementary to \cref{thm:alg closure}.

Let $F=\FF_q$ be the finite field with $q=p^k$ odd. Recall that dimension and determinant form a ring isomorphism
\[
  GW(F)\cong \ZZ\oplus F^\times/F^\boxtimes,
\]
where the right-hand side has trivial multiplication on $F^\times/F^\boxtimes$. As such, every $n$-dimensional form in $GW(F)$ can be written as  either $n\<1\>$ or $(n-1)\<1\>\oplus\<\alpha\>$, where $\alpha$ is a generator of $F^\times$.  Following, \emph{e.g.}, \cite{lam:2005}, it is easy to write down the effect of restriction and transfer on these classes.

\begin{theorem}[Restriction and transfer for finite fields]\label{thm:res and tr for finite field}
Let $F=\FF_q\subseteq \FF_{q^m}=E$ and fix generators $\alpha\in F^\times$, $\beta\in E^\times$. Then
\begin{align*}
\res_F^E\<1\>&=\<1\>\text{,}\\
\res_F^E\<\alpha\>&=\begin{cases}\<1\>&\text{if $m$ is even,}\\\<\beta\>&\text{if $m$ is odd,}\end{cases}\\
\tr_F^E\<1\>&=\begin{cases}(m-1)\<1\>\oplus\<\alpha\>&\text{if $m$ even,}\\
m\<1\>&\text{if $m$ is odd,}\end{cases}\\
\tr_F^E\<\beta\>&=\begin{cases}m\<1\>&\text{if $m$ is even,}\\
(m-1)\<1\>\oplus\<\alpha\>&\text{if $m$ is odd.}\end{cases}
\end{align*}
\end{theorem}
The Rost norm is a multiplicative map $\N_F^E:GW(E)\to GW(F)$ that takes any unary form $\langle a\rangle$ to $\N_F^E(\langle a\rangle) = \langle N_{E/F}(a)\rangle$ where $N_{E/F}:E^\times\to F^\times$ is the classical field norm.  In order to determine the value of $\N_F^E$ on higher-dimensional forms, we need to know how it interacts with summation.  The following theorem of M.~Hill and K.~Mazur gives a general formula for this interaction when the group of equivariance is finite Abelian.

\begin{theorem}[Tambara reciprocity for finite Abelian groups {\cite[Theorem 2.5]{mazur:2019}}]\label{thm:HM} Let $G$ be a finite Abelian group and let $\underline S$ be a $G$-Tambara functor. for all $H<G$ and $a,b\in\underline S(G/H)$ 
\[
\N_H^G(a+b)=\N_H^G(a)+\N_H^G(b)+\sum_{H< K< G}\tr_K^G\bigg(\sum_{k=1}^{i_K}\N_H^K((\underline{ab})_k^K)\bigg)+\tr_H^G(g_H(a,b))
\]
where $i_K$ is the number of orbits of functions from $G/H$ to $\{a,b\}$ with stabilizer exactly $K$, and $(\underline{ab})_k^K$ is a monomial in some of the $W_G(K)$-conjugates of $a$ and $b$, and $g_H(a,b)$ is a polynomial in some of the $W_G(H)$-conjugates of $a$ and $b$.
\end{theorem}

This leads to a far more explicit formula for $\underline S = \underline{GW}$ for an odd prime extension of finite fields.

\begin{lemma}\label{lemma:TR}
Let $F=\FF_q\subseteq \FF_{q^\ell}=E$ where $\ell>2$ is prime. Then for all $a,b\in GW(E)$,
\[
\N_F^E(a+b)=\N_F^E(a)+\N_F^E(b)+\tr_F^E\bigg(\sum_{i=1}^{\ell-1}\frac{{\binom{\ell}{ i}}}{\ell}a^ib^{\ell-i}\bigg).
\]
\end{lemma}

\begin{proof}
Let $G = \operatorname{Gal}(E/F) = \langle \varphi\rangle\cong C_\ell$ where $\varphi$ is the Frobenius homomorphism.  It suffices to determine $g_e(a,b)$ from Theorem \ref{thm:HM}. According to \cite[Corollary 2.6]{mazur:2019}, we have
\[
  g_e(a,b) = \sum_{f\in \mathcal{I}/G}\prod_{i=0}^{\ell-1}\varphi^i(f(\varphi^i))
\]
where $\mathcal{I}$ is the set of \emph{nonconstant} functions $f\colon G\to \{a,b\}$ with the natural action of $G$.  Since $\varphi$ acts trivially on $GW(E)$, we see that we are just adding up all degree $\ell$ ordered monomials in $a$ and $b$ (which are not $a^\ell$ or $b^\ell$) up to cyclic permutation of factors.  Combining like terms, we have
\[
  g_e(a,b)=\sum_{i=1}^{\ell-1}\frac{{\binom{\ell} {i}}}{\ell}a^ib^{\ell-i}
\]
\end{proof}

\begin{theorem}[Norms for prime extensions of finite fields]\label{thm:prime}
Let $F=\FF_q\subseteq \FF_{q^\ell}=E$ for $\ell$ any prime and fix generators $\alpha\in F^\times$, $\beta\in E^\times$. Then
\begin{align*}
\N_F^E(n\<1\>)&=
  \begin{cases}
    n^\ell\<1\> &\text{if $\ell>2$},\\
    (n^2-1)\<1\>\oplus\<\alpha^{\frac{n^2-n}{2}}\> &\text{if $\ell=2$},
  \end{cases}\\
\N_F^E((n-1)\<1\>\oplus\<\beta\>)&=
  \begin{cases}
    (n^\ell-1)\<1\>\oplus\<\alpha^n\>, &\text{if $\ell>2$},\\
    (n^2-1)\<1\>\oplus\<\alpha^{\frac{n^2-3n}{2}}\>,&\text{if $\ell=2$}.
  \end{cases}
\end{align*}
\end{theorem}

\begin{proof}
First consider the effect of $\N_F^E$ on $n\langle 1\rangle$.  The Dress map gives us the commutative diagram
\begin{center}\begin{tikzcd}
\underline A(C_\ell/C_\ell)\arrow[r, "D"] & GW(\FF_{q})\\
\underline A(C_\ell/e)\arrow[r, swap, "D"] \arrow[u, "\N"]& GW(\FF_{q^\ell})\arrow[u, swap, "\N"]
\end{tikzcd}\end{center}
which, via the formulas in \cite{nakaoka:2014}, has the following effect on $n\in \underline A(C_\ell/e)$:
\begin{center}\begin{tikzcd}
n+\frac{n^\ell-n}{\ell}C_\ell/e\arrow[r, mapsto, "D"] & n\<1\>\oplus\frac{n^\ell-n}{\ell}\tr_F^E\<1\>\\
n\arrow[r, mapsto, swap, "D"] \arrow[u, mapsto, "\N"]& n\<1\>.\arrow[u, mapsto, swap, "\N"]
\end{tikzcd}\end{center}
As such, we know that
\[
\N_F^E(n\<1\>)=n\<1\>\oplus\frac{n^\ell-n}{\ell}\tr_F^E\<1\>
\]

We separately discuss the cases $\ell>2$ and $\ell=2$. When $\ell>2$, we know
\[
\N_F^E(n\<1\>)=n\<1\>\oplus\frac{n^\ell-n}{\ell}\cdot \ell\<1\>=n^\ell\<1\>.
\]
If $\ell=2$, then
\begin{align*}
\N_F^E(n\<1\>)&=n\<1\>\oplus\frac{n^2-n}{2}\tr_F^E\<1\>\\
&=n\<1\>\oplus\frac{n^2-n}{2}(\<1\>\oplus\<\alpha\>)\\
&=(n^2-1)\<1\>\oplus\<\alpha^{\frac{n^2-n}{2}}\>
\end{align*}

We now analyze the effect of $\N_F^E$ on terms of the form $(n-1)\<1\>\oplus\<\beta\>$, assuming that $\ell>2$. By Lemma \ref{lemma:TR}, we have
\begin{align*}
\N_F^E((n-1)\<1\>\oplus\<\beta\>)
  &=\N_F^E((n-1)\<1\>)\oplus\N_F^E(\<\beta\>)\oplus\tr_F^E\bigg(\sum_{i=1}^{\ell-1}\frac{{\binom{\ell}{i}}}{\ell}(n-1)^{\ell-i}\<\beta^i\>\bigg)\\
  &=(n-1)^\ell\<1\>\oplus\<\alpha\>\oplus\sum_{i=0}^{\ell-1}{\binom{\ell}{i}}(n-1)^{i}\<\alpha^i\>\\
  &=\sum_{i=0}^{\ell}{\binom{\ell}{ i}}(n-1)^{\ell-i}\<\alpha^i\>
\end{align*}

The isometry class of this quadratic form is determined by its dimension, $n^\ell$, and determinant. The latter quantity is
\[
\sum_{i \text{ odd}}^{\ell}{\binom{\ell}{i}}(n-1)^{\ell-i}\equiv\begin{cases}
\sum_{i \text{ odd}}^{\ell}{\binom{\ell}{i}}\equiv2^{\ell-1}\equiv0&\text{if $n\equiv 0\pmod 2$,}\\
{\binom{\ell}{\ell}}(n-1)^0\equiv1&\text{if $n\equiv 1\pmod 2$.}
\end{cases}
\]
We conclude that when $\ell$ is odd, the norm map is given by
\begin{align*}
\N_F^E(n\<1\>)&=n^\ell\<1\>,\\
\N_F^E((n-1)\<1\>\oplus\<\beta\>&=(n^\ell-1)\<1\>\oplus\<\alpha^n\>.
\end{align*}

It remains to determine $\N_F^E((n-1)\<1\>\oplus\<\beta\>)$ when $\ell=2$. In this case, we have
\begin{align*}
\N_F^E((n-1)\<1\>\oplus\<\beta\>)&=\N_F^E((n-1)\<1\>)\oplus\N_F^E(\<\beta\>)\oplus\tr_F^E(g_e((n-1)\<1\>,\<\beta\>))\\
&=((n-1)^2-1)\<1\>\oplus\<\alpha^{\frac{(n-1)^2-n+1}{2}}\>\oplus\<\alpha\>\oplus\tr_F^E((n-1)\<\beta\>)\\
&=((n-1)^2-1)\<1\>\oplus\<\alpha^{\frac{(n-1)^2-n+1}{2}}\>\oplus\<\alpha\>\oplus2(n-1)\<1\>\\
&=(n^2-1)\<1\>\oplus\<\alpha^{\frac{n^2-3n}{2}}\>
\end{align*}
This covers the final case and concludes the proof.
\end{proof}

\begin{theorem}[Norms for extensions of finite fields]\label{thm:main}
Let $F=\FF_q\subseteq \FF_{q^m}=E$ and fix generators $\alpha\in F^\times$, $\beta\in E^\times$. Then
\begin{align*}
\N_F^E(n\<1\>)&=\begin{cases}
n^m\<1\>&\text{$m$ odd,}\\
(n^m-1)\<1\>\oplus\<\alpha^{\frac{n^2-n}{2}}\>&\text{$m=2$,}\\
(n^m-1)\<1\>\oplus\<\alpha^{\frac{n^3-n^2}{2}}\>&\text{$m>2$ even,}
\end{cases}\\
\N_F^E((n-1)\<1\>\oplus\<\beta\>)&=\begin{cases}
(n^m-1)\<1\>\oplus\<\alpha^n\>&\text{$m$ odd,}\\
(n^m-1)\<1\>\oplus\<\alpha^{\frac{n^2-3n}{2}}\>&\text{$m=2$,}\\
(n^m-1)\<1\>\oplus\<\alpha^{\frac{n^3-3n^2}{2}}\>&\text{$m>2$ even.}
\end{cases}
\end{align*}
\end{theorem}

\begin{proof}
We use functoriality and Theorem \ref{thm:prime} to inductively determine the norms. Firstly, we test the composition of norm for two odd prime extensions. Let $L\subseteq F\subseteq E$ be finite fields of odd prime $s,t$ extension. Take some generator $\alpha\in L^\times$, $\gamma\in F^\times$, and $\beta\in E^\times$. Then we know that
\begin{align*}
\N_L^F\circ\N_F^E(n\<1\>)&=\N_L^F(n^t\<1\>)\\
&=n^{st}\<1\>\\
\N_L^F\circ\N_F^E((n-1)\<1\>\oplus\<\beta\>)&=\N_L^F((n^t-1)\<1\>\oplus\<\gamma^n\>)\\
&=\begin{cases}(n^{st}-1)\<1\>\oplus\<\alpha^{n^t}\>&n \text{ odd}\\
n^{st}\<1\>&n \text{ even}\end{cases}\\
&=(n^{st}-1)\<1\>\oplus\<\alpha^n\>.
\end{align*}
Inductively, we know that for any odd $m$ extension,
\begin{align*}
\N_F^E(n\<1\>)&=n^m\<1\>\\
\N_F^E((n-1)\<1\>\oplus\<\beta\>)&=(n^m-1)\<1\>\oplus\<\alpha^n\>.
\end{align*}

Similarly, we discuss the cases when $s=2$ and $t$ is odd. The computation gives us the result that for such extension,
\begin{align*}
\N_L^F\circ\N_F^E(n\<1\>)&=\N_L^F(n^t\<1\>)\\
&=(n^{st}-1)\<1\>\oplus\<\alpha^{\frac{(n^t)^2-n^t}{2}}\>\\
&=\begin{cases}(n^{st}-1)\<1\>\oplus\<\alpha\>&n\equiv 3\pmod 4\\
n^{st}\<1\>&n\equiv 0,1,2\pmod 4\end{cases}\\
&=(n^{st}-1)\<1\>\oplus\<\alpha^{\frac{n^3-n^2}{2}}\>\\
\N_L^F\circ\N_F^E((n-1)\<1\>\oplus\<\beta\>)&=\N_L^F((n^t-1)\<1\>\oplus\<\gamma^n\>)\\
&=\begin{cases}\N_L^F((n^t-1)\<1\>\oplus\<\alpha\>)&n\equiv 1,3\pmod 4\\
\N_L^F(n^t\<1\>)&n\equiv 0,2\pmod 4\end{cases}\\
&=\begin{cases}(n^{st}-1)\<1\>\oplus\<\alpha\>&n\equiv 1\pmod 4\\
n^{st}\<1\>&n\equiv 0,2,3\pmod 4\end{cases}\\
&=(n^{st}-1)\<1\>\oplus\<\alpha^{\frac{n^3-3n^2}{2}}\>
\end{align*}

For $s=t=2$, we again compute that 
\begin{align*}
\N_L^F\circ\N_F^E(n\<1\>)&=\N_L^F((n^t-1)\<1\>\oplus\<\gamma^{\frac{n^2-n}{2}}\>)\\
&=\begin{cases}\N_L^F((n^t-1)\<1\>\oplus\<\gamma\>)&n\equiv 2,3\pmod 4\\
\N_L^F(n^t\<1\>)&n\equiv 0,1\pmod 4\end{cases}\\
&=\begin{cases}(n^{st}-1)\<1\>\oplus\<\alpha\>&n\equiv 3\pmod 4\\
n^{st}\<1\>&n\equiv 0,1,2\pmod 4\end{cases}\\
&=(n^{st}-1)\<1\>\oplus\<\alpha^{\frac{n^3-n^2}{2}}\>\\
\N_L^F\circ\N_F^E((n-1)\<1\>\oplus\<\beta\>)&=\N_L^F((n^t-1)\<1\>\oplus\<\gamma^{\frac{n^2-3n}{2}}\>)\\
&=\begin{cases}\N_L^F((n^t-1)\<1\>\oplus\<\gamma\>)&n\equiv 1,2\pmod 4\\
\N_L^F(n^t\<1\>)&n\equiv 0,3\pmod 4\end{cases}\\
&=\begin{cases}(n^{st}-1)\<1\>\oplus\<\alpha\>&n\equiv 1\pmod 4\\
n^{st}\<1\>&n\equiv 0,2,3\pmod 4\end{cases}\\
&=(n^{st}-1)\<1\>\oplus\<\alpha^{\frac{n^3-3n^2}{2}}\>
\end{align*}

Therefore, for a $m$ extension where $m=4$ or $m=2t$, $t$ odd, the following result holds.
\begin{align*}
\N_F^E(n\<1\>)&=(n^{m}-1)\<1\>\oplus\<\alpha^{\frac{n^3-n^2}{2}}\>\\
\N_F^E((n-1)\<1\>\oplus\<\beta\>)&=(n^{m}-1)\<1\>\oplus\<\alpha^{\frac{n^3-3n^2}{2}}\>
\end{align*}

Moreover, when compositing such $m,k$ extension,
\begin{align*}
\N_L^F\circ\N_F^E(n\<1\>)&=\N_L^F((n^k-1)\<1\>\oplus\<\gamma^{\frac{n^3-n^2}{2}}\>)\\
&=\begin{cases}\N_L^F((n^k-1)\<1\>\oplus\<\gamma\>)&n\equiv 3\pmod 4\\
\N_L^F(n^k\<1\>)&n\equiv 0,1,2\pmod 4\end{cases}\\
&=\begin{cases}(n^{st}-1)\<1\>\oplus\<\alpha^{\frac{n^{3k}-3n^k}{2}}\>&n\equiv 3\pmod 4\\
(n^{st}-1)\<1\>\oplus\<\alpha^{\frac{n^{3k}-n^k}{2}}\>&n\equiv 0,1,2\pmod 4\end{cases}\\
&=(n^{st}-1)\<1\>\oplus\<\alpha^{\frac{n^3-n^2}{2}}\>\\
\N_L^F\circ\N_F^E((n-1)\<1\>\oplus\<\beta\>)&=\N_L^F((n^t-1)\<1\>\oplus\<\gamma^{\frac{n^2-3n}{2}}\>)\\
&=\begin{cases}\N_L^F((n^t-1)\<1\>\oplus\<\gamma\>)&n\equiv 1\pmod 4\\
\N_L^F(n^t\<1\>)&n\equiv 0,2,3\pmod 4\end{cases}\\
&=\begin{cases}(n^{st}-1)\<1\>\oplus\<\alpha^{\frac{n^{3k}-3n^k}{2}}\>&n\equiv 1\pmod 4\\
(n^{st}-1)\<1\>\oplus\<\alpha^{\frac{n^{3k}-n^k}{2}}\>&n\equiv 0,2,3\pmod 4\end{cases}\\
&=(n^{st}-1)\<1\>\oplus\<\alpha^{\frac{n^3-3n^2}{2}}\>
\end{align*}

Inductively, for any even $m$ extension other then 2, the following result holds.
\begin{align*}
\N_F^E(n\<1\>)&=(n^{m}-1)\<1\>\oplus\<\alpha^{\frac{n^3-n^2}{2}}\>\\
\N_F^E((n-1)\<1\>\oplus\<\beta\>)&=(n^{m}-1)\<1\>\oplus\<\alpha^{\frac{n^3-3n^2}{2}}\>
\end{align*}
This proves the theorem.
\end{proof}



\bibliographystyle{abbrv}
\bibliography{references}

\begin{thebibliography}{10}

\bibitem{bachmann:2016}
T.~{Bachmann}.
\newblock {Some Remarks on Units in Grothendieck-Witt Rings}.
\newblock {\em arXiv:1612.04728v3}, 2017.

\bibitem{bachmann/hoyois:2018}
T.~Bachmann and M.~Hoyois.
\newblock Norms in motivic homotopy theory.
\newblock {\em arXiv:1711.03061v4}, 2018.

\bibitem{drees/epkenhans/kruskemper:1997}
C.~Drees, M.~Epkenhans, and M.~Kr\"uskemper.
\newblock On the computation of the trace form of some {Galois} extensions.
\newblock {\em Journal of Algebra}, 192:209--234, 1997.

\bibitem{dress:1971}
A.~Dress.
\newblock Notes on the theory of representations of finite groups.
\newblock Universit\"at Bielefeld, Fakult\"at f\"ur Mathematik, Bielefeld,
  1971.

\bibitem{epkenhans:1998}
M.~Epkenhans.
\newblock On vanishing theorems for trace forms.
\newblock {\em Acta Mathematica et Informativa Universitatis Ostraviensis},
  6:69--85, 1998.

\bibitem{epkenhans:1999}
M.~Epkenhans.
\newblock On trace forms and the {Burnside} ring.
\newblock In E.~Bayer-Fluckiger, D.~Lewis, and A.~Ranicki, editors, {\em
  Contemporary Mathematics: Quadratic Forms and Their Applications}, volume
  272, pages 39--56, Providence, Rhode Island, 1999. American Mathematical
  Society.

\bibitem{ferrand:1998}
D.~Ferrand.
\newblock Un foncteur norme.
\newblock {\em Bulletin de la Soci\'et\'e Math\'ematique de France}, 126:1--49,
  1998.

\bibitem{heller/ormsby:2014}
J.~Heller and K.~Ormsby.
\newblock Galois equivariance and stable motivic homotopy theory.
\newblock {\em Transactions of the American Mathematical Society}, 368, 2014.

\bibitem{mazur:2019}
M.~Hill and K.~Mazur.
\newblock An equivariant tensor product on {Mackey} functors.
\newblock {\em Journal of Pure and Applied Algebra}, 223, 04 2019.

\bibitem{lam:2005}
T.~Lam.
\newblock {\em Introduction to Quadratic Forms over Fields}.
\newblock Graduate studies in mathematics. American Mathematical Society, 2005.

\bibitem{nakaoka:2009}
H.~Nakaoka.
\newblock Tambara functors on profinite groups and generalized {Burnside}
  functors.
\newblock {\em Communications in Algebra}, 37:3095--3151, 2009.

\bibitem{nakaoka:2011}
H.~Nakaoka.
\newblock Ideals of {Tambara} functors.
\newblock {\em Advances in Mathematics}, 230:2295--2331, 2011.

\bibitem{NAKAOKA2012}
H.~Nakaoka.
\newblock On the fractions of {semi-Mackey and Tambara} functors.
\newblock {\em Journal of Algebra}, 352(1):79 -- 103, 2012.

\bibitem{nakaoka:2014}
H.~Nakaoka.
\newblock The spectrum of the {Burnside Tambara} functor on a finite cyclic
  $p$-group.
\newblock {\em Journal of Algebra}, 398, 01 2014.

\bibitem{rost:2003}
M.~Rost.
\newblock The multiplicative transfer for the {Grothendieck Witt} ring.
\newblock Preprint, 2003.

\bibitem{tambara:1993}
D.~Tambara.
\newblock On multiplicative transfer.
\newblock {\em Communications in Algebra}, 28:1393--1420, 1993.

\bibitem{wittkop:2006}
T.~Wittkop.
\newblock {\em Die multiplikative quadratische Norm fur den
  Grothendieck-Witt-Ring}.
\newblock 2006.

\end{thebibliography}


\end{document}